\title{A Preconditioned Hybrid SVD Method for Computing Accurately Singular Triplets of Large Matrices}
\author{Lingfei Wu\footnotemark[1]
  \and  Andreas Stathopoulos\footnotemark[1]}
\begin{document}
\maketitle
\renewcommand{\thefootnote}{\fnsymbol{footnote}}
\footnotetext[1]{Department of Computer Science, College of William and Mary, Williamsburg, Virginia 23187-8795, 
U.S.A.(lfwu@cs.wm.edu, andreas@cs.wm.edu)} 
\renewcommand{\thefootnote}{\arabic{footnote}}

\begin{abstract}
The computation of a few singular triplets of large, sparse matrices is a challenging task, especially when the smallest magnitude singular values are needed in high accuracy. Most recent efforts try to address this problem through variations of the Lanczos bidiagonalization method, 
but they are still challenged even for medium matrix sizes due to the difficulty of the problem.
We propose a novel SVD approach that can take advantage of preconditioning and of any well designed eigensolver to compute both largest and smallest singular triplets.
Accuracy and efficiency is achieved through a hybrid, two-stage meta-method, PHSVDS. In the first stage, PHSVDS solves the normal equations up to the best achievable accuracy. If further accuracy is required, the method switches automatically to an eigenvalue problem with the augmented matrix. Thus it combines the advantages of the two stages, faster convergence and accuracy, respectively. For the augmented matrix, solving the interior eigenvalue is facilitated by a proper use of the good initial guesses from the first stage and an efficient implementation of the refined projection method. We also discuss how to precondition PHSVDS and to cope with some issues that arise. 
Numerical experiments illustrate the efficiency and robustness of the method. 
\end{abstract}

\section{Introduction}
The Singular Value Decomposition (SVD) is a ubiquitous computational kernel in science and engineering.
Many, highly diverse applications require a few of the largest singular values of a large sparse matrix $A$ and the associated left and right singular vectors (singular triplets). 
These triplets play a critical role in compression and model reduction.
A smaller, but increasingly important, set of applications requires a few smallest singular triplets.
Examples include least squares problems, determination of matrix rank, low rank approximation, and computation of pseudospectrum
\cite{Golub1996MC,trefethen1997numerical}.
Recently we have used such techniques to reduce the variance in Monte Carlo estimations of the 
  the trace of the inverse of a large sparse matrix.

It is well known that the computation of the smallest singular triplets presents challenges both to the speed of convergence and the accuracy of iterative methods. In this paper, we mainly focus on the problem of finding the smallest singular triplets. Assume $ A \in \Re^{m \times n} $ is a large sparse matrix with full column rank and $ m \geq n $. The (economy size) singular value decomposition of A can be written as:
\begin{equation}
A = U \Sigma  V^T
\end{equation} 
where $ U = [u_{1},\ldots , u_{n}] \in \Re^{m \times n}$ is an orthonormal set of the left
  singular vectors and $ V = [v_{1},\ldots , v_{n}] \in \Re^{n \times n}$ is the unitary matrix of
  the right singular vectors. $ \Sigma = diag(\sigma_{1}, \ldots, \sigma_{n}) \in \Re^{n \times n} $  contains the singular values of $A$, $\sigma_{1} \leq \ldots \leq \sigma_{n}$. 
We will be looking for the smallest $k \ll n$ singular triplets 
	$ \{ \sigma_{i}, u_{i}, v_{i} \}, i = 1, \ldots, k$.

There are two approaches to compute the singular triplets $ \{ \sigma_{i}, u_{i}, v_{i} \} $ by using a Hermitian eigensolver. Using MATLAB notation, the first approach seeks eigenpairs of the augmented matrix $ B = [0\ A^T ; A\ 0] \in \Re^{(m+n) \times (m+n)}$, which has eigenvalues $ \pm \sigma_i$ with
 corresponding eigenvectors $([v_i;u_i], [-v_i;u_i])$, as well as $m - n$ zero eigenvalues
 \cite{golub1965calculating,golub1981block,cullum1983lanczos}. 
The main advantage of this approach is that iterative methods can potentially compute the smallest singular values accurately, i.e., with residual norm close to $O(\|A\| \epsilon_{mach})$. However, convergence of eigenvalue iterative methods is slow since it is a highly interior eigenvalue problem, and even the use of iterative refinement or inverse iteration involves a maximally indefinite matrix \cite{parlett1980symmetric}. For restarted iterative methods convergence is even slower, irregular, and often the required eigenvalues are missed since the Rayleigh-Ritz projection method does not effectively extract the appropriate information for interior eigenvectors \cite{morgan1991computing,
Morgan1998Harmonic,jia1997refined}.

The second approach computes eigenpairs of the normal equations matrix $ C = A^T A \in \Re^{n \times n} $ which has eigenvalues $ \sigma_{i}^2$ and associated eigenvectors $v_i$. 
If $\sigma_i \neq 0$, the corresponding left singular vectors are obtained as 
  $u_i = \frac{1}{\sigma_i} A^T v_i$. 
$C$ is implicitly accessed through successive matrix-vector multiplications. 
The squaring of the singular values works in favor of this approach with Krylov methods, 
  especially with largest singular values since their relative separations increase.
Although the separations of the smallest singular values become smaller, 
  we show in this paper that this approach still is faster than Krylov methods on $B$ 
  because it avoids indefiniteness.
On the other hand, squaring the matrix limits the accuracy at which smallest singular triplets 
  can be obtained.
Therefore, this approach is typically followed by a second stage of iterative 
  refinement for each needed singular triplet 
\cite{philippe1997computation,dongarra1983improving,berry1992large}. 
However, this one-by-one refinement does not exploit information from other 
  singular vectors and thus is it not as efficient as an eigensolver applied on $B$ 
  with the estimates of the first stage. 

The Lanczos bidiagonalization (LBD) method \cite{Golub1996MC, golub1965calculating} 
  is accepted as an accurate and more efficient method for seeking singular triplets 
  (especially smallest), and numerous variants have been proposed 
\cite{larsen2001combining, jia2003implicitly, kokiopoulou2004computing, baglama2005augmented, 
baglama2006restarted, baglama2013implicitly, jia2010refined}.
LBD builds the same subspace as Lanczos on matrix $C$, but since it works on 
  $A$ directly, it avoids the numerical problems of squaring. 
However, the Ritz vectors often exhibit slow, irregular convergence when the 
  smallest singular values are clustered. 
To address this problem, harmonic projection 
  \cite{kokiopoulou2004computing, baglama2005augmented},
  refined projection \cite{jia2003implicitly}, and 
  their combinations \cite{jia2010refined} have been applied to LBD. 
Despite remarkable algorithmic progress, current LBD methods are still in 
  development, with only few existing MATLAB implementations that serve 
  mainly as a testbed for mathematical research. 
We show that a two stage approach based on a well designed 
  eigenvalue code can be more robust and efficient for 
  a few singular triplets. 
Most importantly, our approach can use preconditioning through the 
  eigensolver,
  which is not directly possible with LBD but becomes 
  crucial because of the difficulty of the problem even for medium matrix sizes.

The Jacobi-Davidson type SVD method, JDSVD \cite{hochstenbach2001jacobi,hochstenbach2004harmonic}, is based 
  on an inner-outer iteration and can also use preconditioning.
It obtains the left and right singular vectors directly 
  from a projection of $B$ on two subspaces and, although it avoids the
  numerical limitations of matrix $C$, it needs a harmonic 
   \cite{morgan1991computing,paige1995approximate,sleijpen2000jacobi}  
  or a refined projection method \cite{jia1997refined,stewart2001matrix} to avoid irregular
   Rayleigh-Ritz convergence. JDSVD often has difficulty computing the smallest singular values of a 
  rectangular matrix, especially without preconditioning,  
  due to the presence of zero eigenvalues of $B$.

SVDIFP is a recent extension to the inverse free preconditioned 
  Krylov subspace method, \cite{golub2002inverse}, for the singular value problem 
  \cite{liang2014computing}. 
The implementation includes the robust incomplete factorization (RIF) 
  \cite{benzi2003robust} for the normal equations matrix, but other preconditioners can 
  also be used.
To circumvent the intrinsic difficulties of filtering out the zero
  eigenvalues of $B$, the method works with the normal 
  equations matrix $C$, but computes directly the smallest singular values 
  of $A$ and not the eigenvalues of $C$.
Thus good numerical accuracy can be achieved but, 
  as we show later, at the expense of efficiency.
Moreover, the design of SVDIFP is based on restarting with a single vector, 
  which is not effective when seeking more than one singular values. 

In this paper we present a preconditioned hybrid two-stage method, PHSVDS, that achieves both efficiency and accuracy for both largest and smallest singular values under limited memory.  
In the first stage, the proposed method PHSVDS solves an extreme eigenvalue problem on $C$ up to the user required accuracy or up to the accuracy achievable by the normal equations. If further accuracy is required, PHSVDS switches to a second stage where it utilizes the eigenvectors and eigenvalues from $C$ as initial guesses to a Jacobi-Davidson method on $B$, which has been enhanced by a refined projection method. The appropriate choices for tolerances, transitions, selection of target shifts, and initial guesses are handled automatically by the method. We also discuss how to precondition PHSVDS and to cope with possible issues that can arise. 
Our extensive numerical experiments show that PHSVDS,
  implemented on top of the eigensolver PRIMME \cite{stathopoulos2010primme}, 
can be considerably more efficient than all other methods when computing a few of the smallest singular triplets, even without a  preconditioner. With a good preconditioner, the PHSVDS method can be much more efficient and robust than the JDSVD and SVDIFP methods.

In Section 2 we motivate the two stage SVD method based on the convergence of
  Krylov methods to the smallest magnitude eigenvalue of $B$ and $C$. 
In Section 3, we develop the components of the two stage method.
In Section 4, we describe how to precondition PHSVDS, and how to dynamically 
  inspect the quality of preconditioning at the two different stages. 
In Section 5, we present extensive experiments that corroborate our conclusions. 

We denote by $\|.\|$ the 2-norm of a vector or a matrix, 
by $A^T$ the transpose of $A$, by $I$ the identity matrix,
$\kappa(A) = \frac{\sigma_n}{\sigma_1}$, and
by $K_k(A,v) = span\{v, Av, \ldots, A^{k-1}v\}$ the k-dimensional Krylov 
subspace generated by $A$ and the initial vector $v$.

\section{Motivation for a two stage strategy}
We first need to understand whether an eigensolver on $C$ or on $B$ 
  is preferable in terms of convergence and accuracy 
  to iterative solvers that solve the SVD directly.
To address this, we introduce the basic SVD iterative methods 
  and study the asymptotic convergence and the quality of the 
  Krylov subspaces built by different methods.
We conclude that an appropriate choice of eigensolvers and eigenvalue
  problem yields methods that are faster and equally accurate as the best 
  SVD methods, especially in the presence of limited memory.

\subsection{The LBD, JDSVD, and SVDIFP methods}
The LBD method, \cite{golub1965calculating,golub1981block}, starts with unit vectors $p_1$ 
  and $q_1$ and after $k$ steps produces the following decomposition as 
  a partial Lanczos bidiagonalization of $A$:
\begin{equation}
\begin{aligned}
& AP_k = Q_kB_k, \\
& A^TQ_k = P_kB_k^T + r_ke_k^T,
\end{aligned}
\end{equation}
where the $r_k$ is the residual vector at $k$-th step, $e_k$ the 
  $k-$th orthocanonical vector, 
\[ B_k = \left( \begin{array}{cccc}
\alpha_1 & \beta_1 &  &  \\
 & \alpha_2 & \ddots  & \\
&  & \ddots & \beta_{k-1}  \\
&  &  & \alpha_k \end{array} \right)
= Q_k^T A P_k, \] 
and $Q_k$ and $P_k$ are orthonormal bases of the Krylov subspaces 
  $K_k(AA^T, q_1)$, and $K_k(A^TA, p_1)$ respectively.
With properly chosen starting vectors, LBD produces 
  mathematically the same space as the symmetric Lanczos method 
  on $B$ or $C$ \cite{jia2010refined,kokiopoulou2004computing}. 

To approximate singular triplets of $A$, LBD solves the small 
  singular value problem on $B_k$, and uses the corresponding 
  Ritz approximations from $Q_k$ and $P_k$ as left and right 
  singular vectors. 
To address the rapid loss of orthogonality of the columns of $Q_k$ 
  and $P_k$ in finite precision arithmetic, 
  full \cite{golub1981block}, 
  partial \cite{larsen2001combining}, 
  or one-sided reorthogonalization \cite{baglama2005augmented, jia2010refined}
  strategies have been applied to variants of LBD.
With appropriate implementation, these can result in a backward 
  stable algorithm for both singular values and vectors 
  \cite{barlow2013reorthogonalization,jia2010refined}.
Because, all these solutions become expensive when $k$ is large,
  restarted LBD versions have been studied 
  \cite{kokiopoulou2004computing,baglama2005augmented,jia2003implicitly,larsen2001combining}.
The goal is twofold: restart with sufficient subspace information 
  to maintain a good convergence, and identify the appropriate 
  Ritz information to restart with.
The former problem is tackled with implicit or thick restarting 
  \cite{stathopoulos1998dynamic}. 
The latter problem is tackled with combinations of harmonic and refined
  projection methods.
For example, IRLBA \cite{baglama2006restarted} uses a thick restarted
   block LBD with harmonic projection, 
  while IRRHLB \cite{jia2010refined} first computes harmonic Ritz vectors,
  and then uses their Rayleigh quotients in a refined projection to extract 
  refined Ritz vectors from $P_k$ and $Q_k$.

The JDSVD method \cite{hochstenbach2001jacobi} extends the Jacobi-Davidson method and its 
  correction equation for singular value problems by exploiting the 
  special structure of the augmented matrix $B$.
Similarly to LBD, JDSVD computes singular values, not eigenvalues,
  of the projection matrix, and the left and right singular vectors 
  from separate spaces.
Because good quality approximations are important not only for 
  restarting but also in the correction equation, various projection 
  methods can benefit JDSVD.
We introduce only the standard choice where the test and search space are the same.

Let $U$ and $V$ be the bases of the left and right search spaces.
Computing a singular triplet $(\theta,c,d)$ of $H = U^T A V$ yields 
  $(\theta,Uc,Vd)$ as the 
  Ritz approximation of a corresponding singular triplet of $A$. 
Alternatively, the $u=Uc$ and $v=Vc$ can be computed as harmonic or refined 
  singular triplets.
Then JDSVD obtains corrections $s$ and $t$ for $u$ and $v$ by solving 
  (approximately) the following correction equation:
\begin{equation}
\left( \begin{array}{cc}
P_u& 0 \\
0 & P_v  \end{array} \right)
\left( \begin{array}{cc}
-\theta I_m & A \\
A^T & -\theta I_n  \end{array} \right)
\left( \begin{array}{cc}
P_u & 0 \\
0 & P_v  \end{array} \right)
\left( \begin{array}{c}
s \\
t  \end{array} \right)  =
\left( \begin{array}{c}
A v - \theta u \\
A^T u - \theta v \end{array} \right)
\label{JDSVD_correction}
\end{equation}
where $P_v = I_n - vv^T, P_u = I_m-uu^T$. 
The left and right corrections $s,t$ are then orthogonalized 
  against and appended to $U$ and $V$ respectively. 
JDSVD uses thick restarting \cite{stathopoulos1998dynamic, wu2000thick} but retains also
  Ritz vectors from the previous iteration, similarly to the 
  locally optimal Conjugate Gradient recurrence \cite{stathopoulos2007nearlyI}. 
Most importantly, the JDSVD method can take advantage of preconditioning
  when solving (\ref{JDSVD_correction}).
  
The SVDIFP method \cite{liang2014computing} extends the EIGIFP method \cite{golub2002inverse}.
Given an approximation $(x_i, \rho_i)$ at the $i$-th step of the outer method, 
  it builds a Krylov space ${\cal V} = K_k(M(C - \rho_i I), x_i)$, where
  $M$ is a preconditioner for $C$.
To avoid the numerical problems of projecting on $C$, SVDIFP computes 
  the smallest singular values of $A{\cal V}$, by using a two sided projection
  similarly to the LBD.
Because the method focuses only the right singular vector, the left 
  singular vectors can be quite inaccurate.

\subsection{Asymptotic convergence of Krylov methods on $C$ and $B$}
\label{sec:asymptotic}
When seeking largest singular values, it is accepted that Krylov
methods on $C$ are faster than on $B$ \cite{hochstenbach2001jacobi,kokiopoulou2004computing,liang2014computing,berry1992large}.  
The argument is straightforward.

\begin{theorem}
\label{largest_convergence}
Let
$\gamma_B = \frac{\sigma_{n} - \sigma_{n-1}}{\sigma_{n-1} + \sigma_{n}}$ and 
$\gamma_C = \frac{\sigma_{n}^2 - \sigma_{n-1}^2}{\sigma_{n-1}^2 - \sigma_{1}^2}$
be the gap ratios of the largest eigenvalue of matrices $B$ and $C$, respectively.
Then, for the largest eigenvalue, the asymptotic convergence of 
  Lanczos on $C$ is $2$ times faster than Lanczos on $B$.
\end{theorem}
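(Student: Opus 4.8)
The plan is to reduce the claim to the classical Kaniel--Paige--Saad convergence estimate for the Lanczos process and then to compare the two gap ratios by a one-line cancellation.

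First I would recall the standard bound: for a symmetric matrix with largest eigenvalue $\lambda_1$, second largest $\lambda_2$, and smallest eigenvalue $\lambda_{\min}$, the error of the $k$-step Lanczos approximation to $\lambda_1$ is bounded by a constant times $T_{k-1}(1+2\gamma)^{-2}$ (and the associated eigenvector error by $T_{k-1}(1+2\gamma)^{-1}$), where $\gamma=(\lambda_1-\lambda_2)/(\lambda_2-\lambda_{\min})$ is the gap ratio and $T_{k-1}$ is the degree-$(k-1)$ Chebyshev polynomial of the first kind. Since $T_{k-1}(1+2\gamma)=\cosh\big((k-1)\,\mathrm{arccosh}(1+2\gamma)\big)$ grows like $\frac{1}{2}e^{(k-1)\rho(\gamma)}$ with $\rho(\gamma):=\mathrm{arccosh}(1+2\gamma)=2\,\mathrm{arcsinh}\sqrt{\gamma}$, the asymptotic convergence rate per Lanczos step is a fixed multiple of $\rho(\gamma)$, and the number of steps needed to reach a prescribed accuracy is asymptotically proportional to $1/\rho(\gamma)$. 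In the regime of a small gap $\sigma_n-\sigma_{n-1}$ --- the only regime in which convergence is slow enough to be of concern --- both gap ratios tend to $0$ and $\rho(\gamma)=2\sqrt{\gamma}\,(1+O(\gamma))$, so the step count behaves like $\gamma^{-1/2}$.

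Next I would insert the two spectra explicitly. The matrix $B$ has eigenvalues $\{\pm\sigma_i\}_{i=1}^{n}$ together with $m-n$ zeros, so for its largest eigenvalue $\sigma_n$ the nearest neighbour is $\sigma_{n-1}$ and the extreme opposite eigenvalue is $-\sigma_n$, which reproduces $\gamma_B=(\sigma_n-\sigma_{n-1})/(\sigma_{n-1}+\sigma_n)$. The matrix $C=A^TA$ has eigenvalues $\sigma_i^2$, so for its largest eigenvalue $\sigma_n^2$ the nearest neighbour is $\sigma_{n-1}^2$ and the smallest eigenvalue is $\sigma_1^2$, which reproduces $\gamma_C=(\sigma_n^2-\sigma_{n-1}^2)/(\sigma_{n-1}^2-\sigma_1^2)$. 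Substituting into $\rho$, factoring $\sigma_n^2-\sigma_{n-1}^2=(\sigma_n-\sigma_{n-1})(\sigma_n+\sigma_{n-1})$, and cancelling the common factor $\sigma_n-\sigma_{n-1}$ gives, to leading order in the small gap,
\begin{equation}
\frac{\rho(\gamma_C)}{\rho(\gamma_B)}\;\sim\;\sqrt{\frac{\gamma_C}{\gamma_B}}\;=\;\frac{\sigma_n+\sigma_{n-1}}{\sqrt{\sigma_{n-1}^2-\sigma_1^2}}\;\ge\;2,
\end{equation}
where the inequality uses $\sigma_n\ge\sigma_{n-1}$ and $\sigma_1\ge 0$, and the bound approaches $2$ as $\sigma_n\to\sigma_{n-1}$ with $\sigma_1/\sigma_{n-1}\to 0$. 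Hence, asymptotically, Lanczos on $C$ needs (essentially) half as many steps as Lanczos on $B$, which is the assertion.

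The one point requiring care is the exact meaning of ``$2$ times faster'': the factor equals $2$ only in the limit of a vanishing gap together with a negligible smallest singular value, so the honest statement is really the inequality $\rho(\gamma_C)/\rho(\gamma_B)\ge 2$ combined with the small-$\gamma$ equivalence $\mathrm{arcsinh}\sqrt{\gamma}\approx\sqrt{\gamma}$; there is no computational obstacle beyond the cancellation above. As an independent cross-check, $B^2=\mathrm{diag}(A^TA,\,AA^T)$, so a Krylov space of dimension $2k$ built on $B$ generates, through its even powers, the dimension-$k$ Krylov space built on $C$, which already points to the factor $2$ without any reference to the gap ratios.
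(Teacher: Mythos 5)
Your proposal is correct and follows essentially the same route as the paper: the core step in both is the cancellation $\gamma_C/\gamma_B=(\sigma_n+\sigma_{n-1})^2/(\sigma_{n-1}^2-\sigma_1^2)\ge 4$, combined with the premise that the asymptotic rate scales like $\sqrt{\gamma}$. The only difference is cosmetic: the paper simply asserts that premise, while you derive it from the Kaniel--Paige--Saad Chebyshev bound via $\mathrm{arccosh}(1+2\gamma)=2\,\mathrm{arcsinh}\sqrt{\gamma}\approx 2\sqrt{\gamma}$, and you add the (correct) caveat that the factor $2$ is a lower bound attained only in the limit $\sigma_n\to\sigma_{n-1}$, $\sigma_1/\sigma_{n-1}\to 0$, which matches the paper's remark that otherwise Lanczos on $C$ is even faster.
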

\begin{proof}
The asymptotic convergence rate is the square root of the gap ratio. 
Then:
$$
\gamma_C = \frac{(\sigma_{n} - \sigma_{n-1})(\sigma_{n} + \sigma_{n-1})^2}
	{(\sigma_{n}+\sigma_{n-1})(\sigma_{n-1}^2 - \sigma_{1}^2)} 
	= \gamma_B \frac{(\sigma_{n} + \sigma_{n-1})^2}{(\sigma_{n-1}^2 - \sigma_{1}^2)} 
	> \gamma_B \frac{4\sigma_{n-1}^2}{(\sigma_{n-1}^2 - \sigma_{1}^2)}
	= \frac{4 \gamma_B}{1-(\frac{\sigma_1}{\sigma_{n-1}})^2}.
$$
Therefore, for $\sigma_1\approx 0$, the asymptotic convergence rate 
  $\sqrt{\gamma_C}> 2\sqrt{\gamma_B}$.  In the less interesting case
  $\sigma_1 \rightarrow \sigma_{n-1}$,
  Lanczos on $C$ is arbitrarily faster than on $B$.
\end{proof}

For smallest singular values the literature is less clear, although methods that 
  work on $C$ have been avoided for numerical reasons. 
In previous experiments we have observed much faster convergence with approaches 
  on $C$ than on $B$ \cite{WM-CS-2014-03}. 
To obtain some intuition, we perform a basic asymptotic convergence analysis 
  of Krylov methods working on $C$ or on $B$ trying to compute the smallest 
  magnitude eigenvalue. 

\begin{lemma}
\label{LemmaforConvergenceOAAO}
Let the union of two intervals: $K = [-a, -b] \cup [c, d]$, $-b<0<c$, 
  and $p_k(x)$ the optimal degree $k$ polynomial that is as small as possible on $K$ 
  and $p_k(0) = 1$. Let $\epsilon_k = \max_{x\in K} |p_k(x)|$, 
  and $\rho = \lim_{k\rightarrow \infty} \epsilon_k^{1/k}$. Then asymptotically:
$$
\rho \simeq 1 - \sqrt{\frac{bc}{da}}.
$$
\end{lemma}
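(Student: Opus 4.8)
**The plan is to reduce the two-interval minimax problem to a single-interval one via a Möbius (linear-fractional) transformation, and then apply the classical Chebyshev estimate.**

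First, I would recall the classical fact that on a single interval $[\alpha,\beta]$ with $0 \notin [\alpha,\beta]$, the polynomial minimizing the sup-norm subject to the normalization at $0$ is a scaled/shifted Chebyshev polynomial, and its asymptotic $k$-th root of the max-norm is $\rho_{[\alpha,\beta]} = \frac{\sqrt{\kappa}-1}{\sqrt{\kappa}+1}$ where $\kappa = \beta/\alpha$ (assuming $0 < \alpha < \beta$). The difficulty here is that $K = [-a,-b]\cup[c,d]$ straddles the origin and is disconnected, so Chebyshev polynomials do not directly apply. The standard device is to substitute $x \mapsto x^2$: this maps $K$ onto a single interval, since $[-a,-b]$ and $[c,d]$ both map into $[\min(b^2,c^2), \max(a^2,d^2)]$ — but these two squared intervals generally do not coincide, so one instead looks for $p_k$ among \emph{even} polynomials $p_k(x) = q_{\lfloor k/2\rfloor}(x^2)$, which is legitimate asymptotically because the even-degree optimal polynomial is no worse by more than a bounded factor (and the rate $\rho$ is a $k$-th root limit, so constants wash out). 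Actually the cleaner route, and the one I would carry out, is to use the transformation $t = x^2$ directly: an even polynomial $p_k(x)=q_m(x^2)$ with $p_k(0)=1$ corresponds to $q_m(t)$ with $q_m(0)=1$ and degree $m \approx k/2$, and $\max_{x\in K}|p_k(x)| = \max_{t\in \tilde K}|q_m(t)|$ where $\tilde K = [b^2, a^2] \cup [c^2, d^2]$ — still two intervals unless they overlap.

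So the second step is to handle the generic two-interval set $\tilde K$. Here I would invoke the asymptotic theory of minimal polynomials on a union of intervals (Akhiezer / Widom): the relevant rate is governed by the logarithmic capacity and the Green's function of the complement with pole at $0$, but for the purpose of this lemma a first-order (small-gap) asymptotic suffices. The key observation is that the problem is only interesting in the regime where the two intervals of $\tilde K$ are "close" in a suitable sense, or more precisely one treats $b, c$ as the small parameters (the singular values near the origin). In that regime I would expand: write $\rho = 1 - \delta$ and determine $\delta$ to leading order. The single-interval formula gives, for $[b^2,a^2]$ alone, $\rho \approx 1 - 2\sqrt{b^2/a^2} = 1 - 2b/a$ wait — that is not matching the target $1-\sqrt{bc/(da)}$, which is symmetric in the two intervals. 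This tells me the correct reduction must symmetrize over both intervals simultaneously: the pole at $0$ sits \emph{between} $[-a,-b]$ and $[c,d]$, and the effective "distance to the spectrum" combines the left gap $b$ and the right gap $c$, while the "diameter" combines $a$ and $d$.

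Concretely, the step I expect to be the technical heart is constructing the right conformal map. I would map the complement of $[-a,-b]\cup[c,d]$ (in the $x$-variable, keeping the set symmetric-ish) — or better, first send the problem through a Möbius map taking the four endpoints $-a,-b,c,d$ to $\pm 1, \pm 1/\mu$ for suitable $\mu$, making the two intervals symmetric about the origin, while tracking where $0$ goes; then the even-polynomial / squaring trick applies cleanly to a symmetric two-interval set, collapsing it to one interval $[\mu^2, 1]$ with the pole at some point $t_0 = (\text{image of }0)^2$. The rate is then $\rho = \exp(-g(t_0))$ where $g$ is the Green's function of $\mathbb{C}\setminus[\mu^2,1]$ with pole at infinity, evaluated... no — with pole handled appropriately; to first order in the small gap $1-\mu$ and in $t_0$, the exponent $g(t_0)$ expands to $\sqrt{bc/(da)}$ after translating back through the Möbius and squaring substitutions. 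I would verify the leading-order expansion by a direct computation in the simplest symmetric case ($a=d$, $b=c$), where the answer must reduce to $1 - b/a$, and indeed $\sqrt{bc/(da)} = b/a$ there — a good consistency check. The main obstacle is bookkeeping the composition of (i) the Möbius normalization of endpoints, (ii) the squaring map, and (iii) the shifted-Chebyshev estimate on the resulting single interval, and showing that all the lower-order terms are genuinely lower order so that the stated first-order asymptotic $\rho \simeq 1 - \sqrt{bc/(da)}$ emerges; the "$\simeq$" in the statement signals that only this leading term is claimed, which makes the estimate tractable without the full Widom machinery.
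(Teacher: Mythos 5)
The paper's own proof is a one-line citation: the estimate is precisely Theorem 5 of Shen, Strang, and Wathen \cite{shen2001potential}, which gives the asymptotic convergence factor for a set $[-a,-b]\cup[c,d]$ via the potential theory of several intervals. You have correctly identified the machinery that lies behind that theorem (the rate is $e^{-g_{\Omega}(0)}$ with $g_{\Omega}$ the Green's function of the complement of $K$ with pole at infinity, computed by conformal mapping and then expanded in the small-gap regime), and your symmetric-case sanity check is sound to leading order (the exact rate for $[-a,-b]\cup[b,a]$ is $\sqrt{(a-b)/(a+b)}$, which matches $1-b/a$ only to first order in $b/a$, consistent with the ``$\simeq$'' in the statement). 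But as a proof the proposal has a genuine gap exactly where the lemma lives: the claim that, after your chain of transformations, ``the exponent $g(t_0)$ expands to $\sqrt{bc/(da)}$'' is asserted, not derived, and that expansion is the entire content of the result --- everything preceding it (single-interval Chebyshev, the evenness/squaring device, the observation that the unsymmetrized squaring trick fails because $[b^2,a^2]$ and $[c^2,d^2]$ need not coincide) is standard.

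In addition, the M\"obius step is shaky as described. A linear-fractional change of variable does not map polynomials to polynomials, so the constrained minimax problem with the normalization $p_k(0)=1$ is invariant only under affine maps; if instead you work entirely in the potential-theoretic formulation, a M\"obius map relocates the pole at infinity, so the Green's function you must evaluate after symmetrizing the endpoints is not the one you started with, and this bookkeeping (``pole handled appropriately'') is precisely what you wave away. Carrying it out rigorously amounts to reproducing the Akhiezer/Widom elliptic-integral formula for two intervals and extracting its first-order term --- i.e., reproving the cited theorem. So the approach is plausible and points at the right theory, but in its present form it assumes the key estimate rather than establishing it; either complete that expansion explicitly or do what the paper does and invoke Theorem 5 of \cite{shen2001potential} directly.
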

\begin{proof}
This is an application of Theorem 5 in \cite{shen2001potential}. 
\end{proof}

This $\rho$ translates to an upper bound for the asymptotic convergence rate 
  of any Krylov solver applied to an indefinite matrix whose spectrum lies in 
  the interval $K$.
Thus it can also be used for the convergence rate to the
  smallest positive eigenvalue of the augmented matrix $B$.
Assume $\sigma_1$ is a simple eigenvalue of $B$ and thus $\sigma_1^2$
  is a simple eigenvalue of $C$. Define its gap ratio in $C$ as,
  $\gamma = \frac{\sigma_{2}^2 - \sigma_{1}^2}{\sigma_{n}^2 - \sigma_{2}^2}$, 
  and assume $\gamma \ll 1$.

\begin{theorem}
\label{Convergence_indefinite} 
Consider the spectrum of the matrix $B-\sigma_1 I$, which lies 
  (except for the zero eigenvalue) in the two intervals: 
$K = [-\sigma_{n}-\sigma_{1}, -2\sigma_{1}] \cup 
     [ \sigma_{2}-\sigma_{1}, \sigma_{n}-\sigma_{1}]$.
The asymptotic convergence rate for any Krylov solver that finds 
  $\sigma_1$ is bounded by:
$$
\rho = 1 - \sqrt{\gamma \ \frac{2\sigma_{1}}{\sigma_{2}+\sigma_{1}} \
   	\frac{\sigma_n^2-\sigma_2^2}{\sigma_n^2-\sigma_1^2}}.
$$
\end{theorem}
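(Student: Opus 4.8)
The plan is to obtain the statement as a direct specialization of Lemma~\ref{LemmaforConvergenceOAAO}, followed by a purely algebraic rewriting of the resulting rate in terms of $\gamma$. The first step is to locate the spectrum of $B-\sigma_1 I$. Since $B$ has eigenvalues $\pm\sigma_i$, $i=1,\dots,n$ (and, when $m>n$, $m-n$ zero eigenvalues), shifting by $\sigma_1$ moves the target eigenvalue $\sigma_1$ to $0$, moves the remaining positive eigenvalues $\sigma_i-\sigma_1$, $i=2,\dots,n$, into $[\,\sigma_2-\sigma_1,\ \sigma_n-\sigma_1\,]$, and moves the negative eigenvalues $-\sigma_i-\sigma_1$, $i=1,\dots,n$, into $[\,-\sigma_n-\sigma_1,\ -2\sigma_1\,]$. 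Since $A$ has full column rank, $\sigma_1>0$, and since $\sigma_1$ is simple, $\sigma_1<\sigma_2$; hence the two intervals $K=[-a,-b]\cup[c,d]$ with $a=\sigma_n+\sigma_1$, $b=2\sigma_1$, $c=\sigma_2-\sigma_1$, $d=\sigma_n-\sigma_1$ satisfy $-b<0<c$ and contain the whole spectrum except the target $0$ (in the square case; when $m>n$ there are $m-n$ extra eigenvalues at $-\sigma_1\in(-2\sigma_1,0)$, which I would either exclude by restricting to $m=n$ or dismiss as deflated / not excited in the working subspace).

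The second step applies the Lemma. The asymptotic error of a Krylov eigensolver converging to the (simple) eigenvalue shifted to $0$ behaves, up to a $k$-independent factor arising from the angle between the starting vector and the target eigenvector, like the minimax quantity $\epsilon_k$ of Lemma~\ref{LemmaforConvergenceOAAO}; hence the asymptotic convergence rate is bounded by $\rho\simeq 1-\sqrt{bc/(da)}$. Substituting the endpoints and then using $\sigma_2^2-\sigma_1^2=(\sigma_2-\sigma_1)(\sigma_2+\sigma_1)$,
$$
1-\rho \;\simeq\; \sqrt{\frac{bc}{da}}
= \sqrt{\frac{2\sigma_1(\sigma_2-\sigma_1)}{\sigma_n^2-\sigma_1^2}}
= \sqrt{\frac{\sigma_2^2-\sigma_1^2}{\sigma_n^2-\sigma_2^2}\cdot\frac{2\sigma_1}{\sigma_2+\sigma_1}\cdot\frac{\sigma_n^2-\sigma_2^2}{\sigma_n^2-\sigma_1^2}}
= \sqrt{\gamma\ \frac{2\sigma_1}{\sigma_2+\sigma_1}\ \frac{\sigma_n^2-\sigma_2^2}{\sigma_n^2-\sigma_1^2}},
$$
which is exactly the claimed expression.

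The algebraic identity in the second step is routine. The only genuine points that require care are (i) the $m-n$ zero eigenvalues of $B$ noted above, which strictly speaking sit outside $K$ and must be argued away or ruled out; and (ii) justifying that the potential-theoretic rate of Lemma~\ref{LemmaforConvergenceOAAO} is the correct asymptotic model for the convergence of a Krylov eigensolver to a simple interior eigenvalue --- i.e., that an optimal-polynomial (Kaniel--Paige--Saad-type) bound governs the Rayleigh--Ritz approximation and that the initial bias toward the target eigenvector contributes only a constant that disappears in the $k$-th root limit. The hypothesis $\gamma\ll1$ is what keeps $bc/(da)$ small, so that the first-order form $1-\sqrt{bc/(da)}$ of the Lemma is the relevant one and $\rho$ is indeed close to $1$. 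The substantive mathematical content (the shape of the extremal polynomial on a union of two intervals) is imported wholesale from Lemma~\ref{LemmaforConvergenceOAAO}, so no further obstacle is expected here.
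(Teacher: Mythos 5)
Your proposal is correct and follows essentially the same route as the paper: identify the shifted spectrum endpoints $a=\sigma_n+\sigma_1$, $b=2\sigma_1$, $c=\sigma_2-\sigma_1$, $d=\sigma_n-\sigma_1$, apply Lemma~\ref{LemmaforConvergenceOAAO}, and rewrite $bc/(ad)$ in terms of $\gamma$ via $\sigma_2^2-\sigma_1^2=(\sigma_2-\sigma_1)(\sigma_2+\sigma_1)$, exactly as in the paper's computation. Your additional remarks about the $m-n$ zero eigenvalues of $B$ (which shift to $-\sigma_1\notin K$) and the justification of the minimax-polynomial convergence model go slightly beyond what the paper makes explicit, but they do not change the argument.
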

\begin{proof}
Clearly, the optimal polynomial $p_k(x)$ of Lemma \ref{LemmaforConvergenceOAAO} 
  is the best polynomial for finding $\sigma_1$.
Applying the Lemma for the specific bounds for this interval we get:
\begin{equation*}
  \frac{bc}{ad} 
= \frac{2\sigma_{1}(\sigma_{2}-\sigma_{1})}{(\sigma_{n}+\sigma_{1})(\sigma_{n}-\sigma_{1})} 
= \frac{2\sigma_{1}}{\sigma_{2}+\sigma_{1}} \ \frac{\sigma_{2}^2-\sigma_{1}^2}{\sigma_{n}^2-\sigma_{1}^2}
= \frac{2\sigma_1}{\sigma_2+\sigma_1} \ \frac{\sigma_2^2-\sigma_1^2}{\sigma_n^2-\sigma_2^2} \
   \frac{\sigma_n^2-\sigma_2^2}{\sigma_n^2-\sigma_1^2}.
\end{equation*}
\end{proof}

\begin{lemma}
\label{ConvergenceATA}
The bound of the asymptotic convergence rate to $\sigma_1^2$ of Lanczos on $C$ 
  is approximately:
$ q = 1 - 2\sqrt{\gamma}$.
\end{lemma}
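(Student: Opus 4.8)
The plan is to invoke the classical Chebyshev-polynomial bound for the Lanczos process applied to the symmetric positive definite matrix $C$ when the smallest eigenvalue $\sigma_1^2$ is sought, and then simply expand it for small $\gamma$. First I would recall that, since $\sigma_1^2$ is simple and the remaining spectrum of $C$ lies in the single interval $[\sigma_2^2,\sigma_n^2]$, the asymptotic error reduction per step is governed by the minimax quantity $\epsilon_k = 1/|T_k(\mu)|$, where $T_k$ is the degree-$k$ Chebyshev polynomial of the first kind and $\mu$ is the image of $\sigma_1^2$ under the affine map that sends $[\sigma_2^2,\sigma_n^2]$ onto $[-1,1]$. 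A direct computation gives
\[
\mu = \frac{2\sigma_1^2 - \sigma_2^2 - \sigma_n^2}{\sigma_n^2 - \sigma_2^2},
\qquad |\mu| = 1 + 2\gamma,
\]
using the definition $\gamma = (\sigma_2^2-\sigma_1^2)/(\sigma_n^2-\sigma_2^2)$; note $|\mu|>1$ precisely because $\sigma_1<\sigma_2$, so the bound is nontrivial.

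Next I would pass to the asymptotic rate $q = \lim_{k\to\infty}\epsilon_k^{1/k}$. Using the standard growth estimate $T_k(1+\delta)\sim \tfrac12\bigl(1+\delta+\sqrt{(1+\delta)^2-1}\bigr)^k$ valid for $\delta>0$, and $|T_k(\mu)| = T_k(|\mu|)$, we obtain
\[
q = \frac{1}{|\mu| + \sqrt{\mu^2-1}} = \frac{1}{\,1 + 2\gamma + 2\sqrt{\gamma(1+\gamma)}\,}.
\]
Finally, expanding for $\gamma \ll 1$ — here $\sqrt{\gamma(1+\gamma)} = \sqrt{\gamma} + O(\gamma^{3/2})$ and $1/(1+x) = 1 - x + O(x^2)$ — yields $q = 1 - 2\sqrt{\gamma} + O(\gamma)$, which is the claimed bound. (Alternatively one can view this as the special one-interval case of Lemma~\ref{LemmaforConvergenceOAAO}, obtained by collapsing the negative interval, but quoting the classical single-interval Chebyshev bound is cleaner.)

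The computation is routine; the only point that needs care is the modeling step — treating the rest of the spectrum of $C$ as the single interval $[\sigma_2^2,\sigma_n^2]$ and identifying the relevant gap ratio as $\gamma$ rather than, say, $(\sigma_2^2-\sigma_1^2)/(\sigma_n^2-\sigma_1^2)$ — so that the resulting rate is expressed in exactly the same $\gamma$ as the bound for $B$ in Theorem~\ref{Convergence_indefinite} and the two can be compared directly. I would also remark that the eigenvalue error, the eigenvector error, and the residual norm all share this same asymptotic rate, so the statement does not depend on which error measure is used.
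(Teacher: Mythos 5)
Your proof is correct, and it reaches the paper's bound by a more self-contained route than the paper itself. The paper's proof is essentially a citation: it quotes the classical approximation $e^{-2\sqrt{\gamma}}$ for the per-step rate of Lanczos converging to $\sigma_1^2$ (Parlett, p.~280) and then Taylor-expands to obtain $1-2\sqrt{\gamma}+O(\gamma)$. You instead rederive that classical bound from scratch: mapping $[\sigma_2^2,\sigma_n^2]$ onto $[-1,1]$ gives $|\mu|=1+2\gamma$, the Chebyshev growth estimate gives the exact asymptotic rate $q=\bigl(|\mu|+\sqrt{\mu^2-1}\bigr)^{-1}=\bigl(\sqrt{\gamma}+\sqrt{1+\gamma}\bigr)^{-2}=\bigl(1+2\gamma+2\sqrt{\gamma(1+\gamma)}\bigr)^{-1}$, and expansion for $\gamma\ll 1$ yields $1-2\sqrt{\gamma}+O(\gamma)$; your algebra checks out. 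Both arguments rest on the same Kaniel--Saad/Chebyshev theory, so the difference is one of presentation rather than substance: your version makes explicit where the approximation enters, confirms that the rate is expressed in exactly the $\gamma$ used for $B$ in Theorem~\ref{Convergence_indefinite}, and exhibits the lemma as the degenerate one-interval case of Lemma~\ref{LemmaforConvergenceOAAO}, whereas the paper buys brevity by outsourcing the estimate to Parlett. One small caveat: your closing remark that the eigenvalue error shares this same asymptotic rate is loose---in the Kaniel--Saad bounds the eigenvalue error decays roughly like the square of the angle/residual rate---but this aside does not affect the lemma, which concerns the convergence rate used for the comparison in Theorem~\ref{Speed_Ratio}.
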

\begin{proof}
The bound on the rate of convergence of Lanczos for $\sigma_1^2$ is approximated as
   $e^{-2\sqrt{\gamma}}$ \cite[p. 280]{parlett1980symmetric}.
Taking the first order approximation from Taylor series around 0, we obtain 
   $e^{-2\sqrt{\gamma}} = 1 - 2\sqrt{\gamma} + O(\gamma)$.
\end{proof}

\begin{theorem}
\label{Speed_Ratio} 
A Krylov method on $C$ that computes $\sigma_1^2$ has always faster 
  asymptotic convergence rate than a Krylov method on $B$ that finds $\sigma_1$,
  by a factor of
\begin{equation} 
\tau = \frac{1 - \sqrt{\gamma} \sqrt{ \frac{2\sigma_{1}}{\sigma_{2}+\sigma_{1}} \
	     \frac{\sigma_n^2-\sigma_2^2}{\sigma_n^2-\sigma_1^2} }
            }
            {1 - 2\sqrt{\gamma}}.
\end{equation}
\end{theorem}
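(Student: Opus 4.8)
The plan is to obtain $\tau$ as nothing more than the quotient of the two asymptotic-rate bounds already established, and then to check that this quotient exceeds one so that the qualifier ``faster'' is earned. Write $\rho$ for the bound of Theorem~\ref{Convergence_indefinite} on the asymptotic rate of any Krylov method that extracts $\sigma_1$ from $B$, and $q = 1 - 2\sqrt{\gamma}$ for the bound of Lemma~\ref{ConvergenceATA} on the rate of Lanczos extracting $\sigma_1^2$ from $C$. Setting $X = \frac{2\sigma_1}{\sigma_2+\sigma_1}\,\frac{\sigma_n^2-\sigma_2^2}{\sigma_n^2-\sigma_1^2}$ and pulling $\sqrt{\gamma}$ out of the square root in $\rho$, we have $\rho = 1 - \sqrt{\gamma}\,\sqrt{X}$, so the displayed $\tau$ is exactly $\rho/q$. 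The first step, therefore, is just this algebraic rewriting, which already proves the formula; what remains is to justify the word ``faster''.

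The second step is to show $\tau = \rho/q > 1$. Since $\gamma \ll 1$ keeps $q = 1 - 2\sqrt{\gamma} > 0$, this is equivalent to $\rho > q$, i.e. to $\sqrt{\gamma}\sqrt{X} < 2\sqrt{\gamma}$, i.e. to $X < 4$. I would bound the two factors of $X$ individually. Because $\sigma_1$ is assumed a simple eigenvalue of $B$ we have $0 < \sigma_1 < \sigma_2 \le \sigma_n$; hence $2\sigma_1 < \sigma_1 + \sigma_2$ gives $\frac{2\sigma_1}{\sigma_2+\sigma_1} < 1$, and $\sigma_1 < \sigma_2$ gives $\frac{\sigma_n^2-\sigma_2^2}{\sigma_n^2-\sigma_1^2} < 1$. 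Thus $X < 1$, so in fact $\sqrt{\gamma}\sqrt{X} < \sqrt{\gamma} < 2\sqrt{\gamma}$, whence $\rho > q$ and $\tau > 1$. As a by-product one sees the behavior of $\tau$: as $\sigma_1 \to 0$ the first factor of $X$ vanishes, so $\tau \to (1-2\sqrt{\gamma})^{-1} \approx 1 + 2\sqrt{\gamma}$, which makes the advantage of $C$ quantitative.

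I do not anticipate a genuine obstacle, since the statement is essentially a corollary of Theorem~\ref{Convergence_indefinite} and Lemma~\ref{ConvergenceATA}; the care needed is only in bookkeeping. First, both rate estimates are first-order approximations valid for $\gamma \ll 1$, so the conclusion is asymptotic and forming the ratio requires the denominator to be positive, which the same assumption secures. Second, one should be explicit that ``faster by a factor $\tau$'' here means the $C$-rate is $\tau$ times smaller than the $B$-rate, $q = \rho/\tau$, rather than a claim about iteration counts — the latter ratio, to leading order through the logarithms of the rates, would instead be $\sqrt{X}/2$.
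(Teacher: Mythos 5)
Your proposal is correct and follows essentially the same route as the paper: $\tau$ is read off as the quotient of the bound in Theorem~\ref{Convergence_indefinite} by that of Lemma~\ref{ConvergenceATA}, and ``faster'' reduces to showing $\frac{2\sigma_1}{\sigma_2+\sigma_1}\,\frac{\sigma_n^2-\sigma_2^2}{\sigma_n^2-\sigma_1^2} < 4$. Your verification is in fact slightly cleaner (each factor is bounded by $1$, so the product is below $1$, not merely below $4$), whereas the paper rearranges the inequality into a trivially true condition, but the substance is identical.
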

\begin{proof}
For the method on $C$ to be faster it must hold $\tau > 1$ or 
  $ \frac{2\sigma_{1}}{\sigma_{2}+\sigma_{1}} \
    \frac{\sigma_n^2-\sigma_2^2}{\sigma_n^2-\sigma_1^2} < 4$.
Basic manipulations lead to the condition 
  $ (4-2 \frac{\sigma_n^2-\sigma_2^2}{\sigma_n^2-\sigma_1^2}) \sigma_1 > = -4 \sigma_2$.
Since $\frac{\sigma_n^2-\sigma_2^2}{\sigma_n^2-\sigma_1^2} < 1$ and all $\sigma_i > 0$, 
  the above condition always holds. 
\end{proof}

First, we observe that if $\sigma_1$ is very close to 0, 
  the normal equations approach becomes arbitrarily faster than the augmented one,
  as long as $\sigma_2$ remains bounded away from 0. 
Second, it is not hard to see that $\tau = 1 + O(\sqrt{\sigma_2-\sigma_1})$, 
  which means that the two approaches become similar with highly clustered eigenvalues.
In that case, however, using a block method would increase the gap ratios and the
  gains from the approach on $C$ would be larger again.

Most importantly, the above asymptotic convergence rates reflect optimal 
  methods applied to $C$ and $B$ and an extraction of the best 
  information from the subspaces.
In practice, memory and computational requirements necessitate 
  the restarting of iterative methods, which results in significant 
  convergence slow down.
For extremal eigenvalues, combinations of thick restarting with the 
  locally optimal conjugate gradient directions have been shown to 
  almost fully restore the convergence of the unrestarted Lanczos method.
The GD+k and extensions to LOBPCG are such nearly-optimal methods
  \cite{stathopoulos1998dynamic,stathopoulos2007nearlyI}.
For interior eigenvalues, practical Krylov methods not only have a 
  hard time achieving this convergence, but also have problems
  extracting the best eigenvectors from the subspace.
Therefore, we expect in practice the normal equations to be significantly 
  faster than any approach based on $B$.
  
\subsection{Comparison of subspaces from Lanczos, LBD and JDSVD}
\label{sec:subspaces}
We extend the discussion on Lanczos to include two native SVD methods,
  and infer the relative differences between their convergence
  by studying the subspace they build.
A higher dimensional Krylov subspace implies faster convergence, assuming
  eigenvector approximations can be extracted effectively from the subspace.
We compare LBD, JDSVD, and Lanczos (or equivalently unpreconditioned GD) 
  on $C$ and on $B$.

Suppose $u_1, v_1$ are left and right initial guesses.
After $k$ iterations ($2k$ matvecs), Lanczos working on the normal equations matrix $C$ builds:
\begin{equation}
V_k = K_k(A^TA, v_1).
\end{equation}
The LBD method builds both left and right Krylov spaces \cite{baglama2005augmented}:
\begin{equation}
U_k = K_k(AA^T, A v_1), \qquad V_k = K_k(A^TA, v_1).
\end{equation}
The JDSVD method also builds two subspaces, each being a direct sum of two 
  Krylov spaces of half the dimension \cite{hochstenbach2001jacobi}: 
\begin{equation}
U_k = K_{\frac{k}{2}}(AA^T, u_1) \oplus K_{\frac{k}{2}}(AA^T, A v_1), 
\quad V_k = K_{\frac{k}{2}}(A^TA, v_1) \oplus K_{\frac{k}{2}}(A^TA, A^T u_1)
\end{equation}
Lanczos working on $B$ builds $K_k(B,[v_1;u_1])$ which does not correspond 
  exactly to the spaces above in general. 
In the special case of $u_1 = 0$, the subspace is given below:
\begin{equation}
\left( \begin{array}{c} U_k \\ V_k \end{array} \right) = 
\left( \begin{array}{c} 0  \\
                        K_{\frac{k}{2}}(A^TA, v_1) \end{array} \right)
\oplus
\left( \begin{array}{c} K_{\frac{k}{2}}(AA^T, A v_1) \\
                        0                          \end{array} \right).
\end{equation}

Clearly, Lanczos working on $C$ and LBD build the same Krylov subspace 
  for right singular vectors. 
The LBD method also builds the Krylov subspace for left singular vectors, and while 
  that helps generate the bidiagonal projection, it does not improve convergence 
  over Lanczos on $C$.
On the other hand, Lanczos on $B$ and JDSVD build a $k$ vector 
 subspace, but this comes from a direct sum of Krylov spaces of $k/2$ dimension.
Thus, they are expected to take twice the number of iterations of LBD in the 
  worst case.
The JDSVD subspace can be richer than that of Lanczos on $B$ because JDSVD
  handles the left and right search spaces independently for arbitrary initial 
  guesses.

\begin{figure}
  \centering
  \subfigure[unrestarted]{\includegraphics[width=0.49\textwidth]{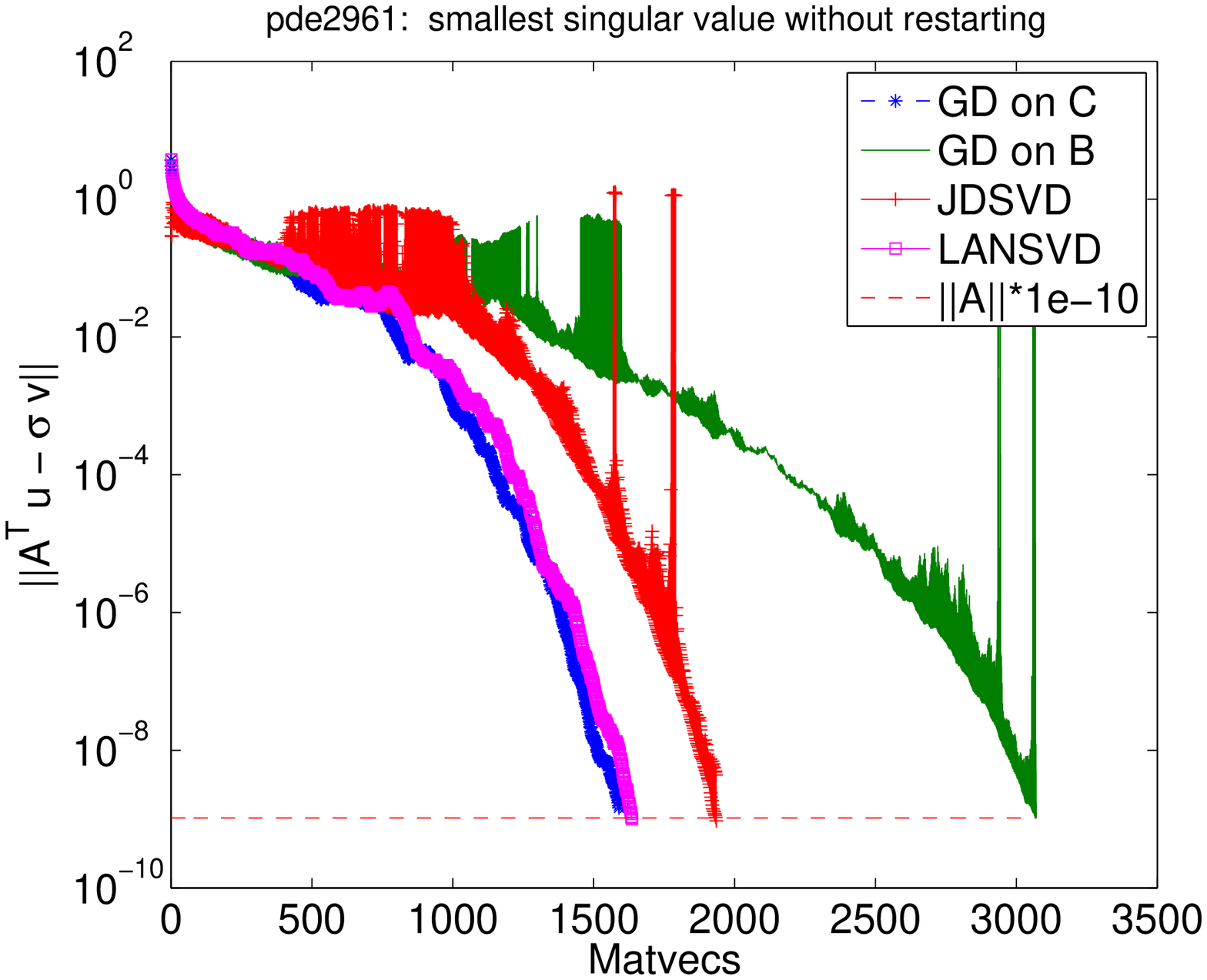}\label{fig:motivation_unrestarted}}                
 \subfigure[restarted]{\includegraphics[width=0.49\textwidth]{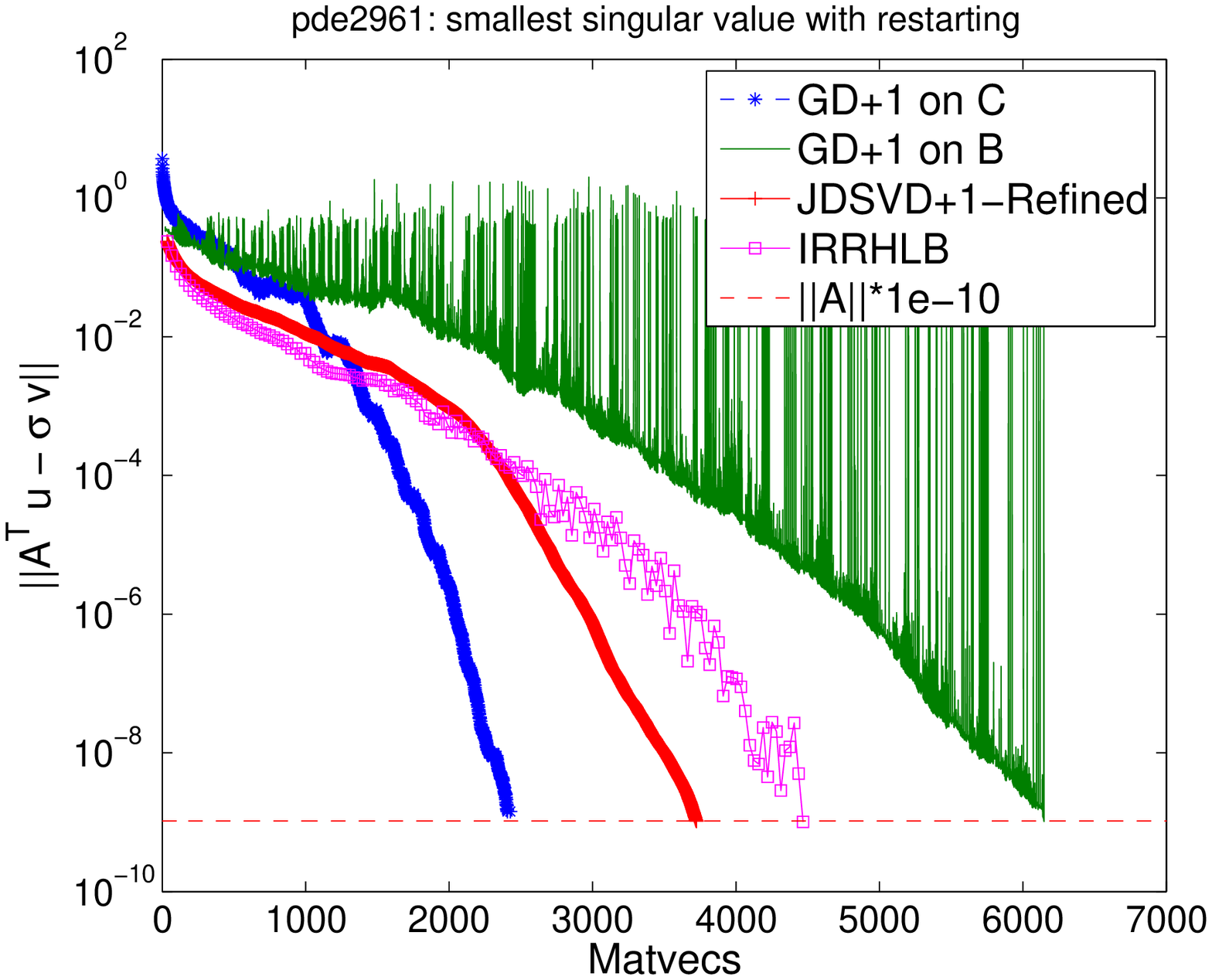}
 \label{fig:motivation_restarted}}   
  \caption{Comparing convergence speed of eigenmethods on $C$, $B$, 
LBD, and JDSVD in both unrestarted and restarted case for matrix pde2961. 
LANSVD implements LBD without restarting \cite{larsen2001combining} while IRRHLB 
  is currently the most advanced LBD method with implicit restarting 
  \cite{jia2010refined}.}
  \label{fig:motivation}
\end{figure}

Figure \ref{fig:motivation_unrestarted} demonstrates the relative convergence behavior of these 
  unrestarted methods seeking the smallest singular value of a sample matrix. 
Only the outer iteration of JDSVD is used (inner iterations = 0).
The results agree with the above analysis. The convergence speed of LBD is the same as GD on $C$. 
JDSVD is slower than LBD or GD but faster than GD on $B$ which is about twice as slow as LBD. 

These methods will inevitably be used with restarting.
Because LBD, JDSVD, and GD on $B$ extract interior spectral information from 
  the subspaces, critical directions may be dropped during restarting,
  causing significant convergence slow downs and irregular behavior.
The use of harmonic or refined Ritz projections during restart help 
  ameliorate this problem up to a point. 
However, the problem is still an interior one.
In contrast, GD+k on $C$ should see a far smaller effect on its convergence.

Figure \ref{fig:motivation_restarted} reflects the above and the advantage
  of solving an extreme eigenproblem with GD+k.
The only disadvantage is the limited accuracy because of the squared 
  conditioning of $C$.
Thus, a natural idea is to apply another phase to refine the accuracy 
  until user requirements are satisfied. 
Instead of iterative refinement, we claim that a second stage eigensolver on $B$ is more efficient.

\subsection{Necessary eigensolver features}
\label{subsection: Necessary eigensolver features}
Our goal is to develop a method that solves large, sparse 
  singular value problems with unprecedented efficiency, robustness, and 
  accuracy. 
This requires eigensolvers with a certain set of features.
First, the eigensolver should be able to use preconditioning because very slow 
  convergence is a limiting factor for seeking smallest singular triplets.
Second, large problem size suggests the use of advanced restarting techniques 
  so that limiting memory does not impede convergence.
Third, the eigensolver of the second stage should be able to exploit
  the good quality of the several singular vectors and singular values 
  computed in the first stage.
Fourth, the eigensolver on $B$ can benefit from refined or Harmonic 
  Ritz procedures for computing interior eigenvalues.

The state-of-the-art package PRIMME 
  (PReconditioned Iterative MultiMethod Eigensolver) \cite{stathopoulos2010primme}
  implements the GD+k and JDQMR methods that satisfy most of the 
  above requirements and provides a host of additional features.
With a few modifications we have developed our method on top of PRIMME.
However, appropriate eigensolvers from other packages could also be used.

\section{Developing the two stage strategy}
We develop PHSVDS, a two-stage SVD meta-method that first gets a fast solution of the eigenvalue problem 
  on $C$ to the best accuracy possible, and then resolves the remaining 
  accuracy with an eigensolver on $B$.
We discuss and automate issues of accuracy, convergence tolerance, initial guesses, and interior eigenvalues of $B$.
  
\subsection{The first stage of PHSVDS}
Although an eigensolver on $C$ can be much faster than other methods, 
  the residual norms of the eigenvalues involve $\|C\|=\|A\|^2$. 
Thus achieving the required numerical accuracy may not be possible. 

Let $(\sigma, u, v)$ be a targeted singular triplet of $A$ and 
  $(\tilde\sigma^2, \tilde v)$ the approximating Ritz pair from an
  eigenmethod working on $C$. 
Using the approximation $\tilde u = A\tilde v /\tilde\sigma $, 
  we can write the following four residuals:
\begin{equation}
\label{eq: different residuals}
\begin{array}{llll}
 r_v =  A   \tilde v - \tilde \sigma \tilde u, &  
 r_u =  A^T \tilde u - \tilde \sigma \tilde v, &
 r_C =  C \tilde v - \tilde \sigma^2 \tilde v, & 
 r_B =  B \left[\begin{array}{c}\tilde v\\ \tilde u\end{array}\right] - \tilde \sigma 
	   \left[\begin{array}{c}\tilde v\\ \tilde u\end{array}\right].
\end{array}
\end{equation}
Typically a singular triplet is considered converged when $\|r_v\|$ and 
$\|r_u\|$ are less than a given tolerance. Since our eigenvalue methods
 work on $C$ and $B$ we need to relate the above quantities. First, it is 
 easy to see that $r_C = A^T( A\tilde v) - \tilde \sigma^2 \tilde v 
 = \tilde \sigma A^T \tilde u - \tilde \sigma^2 \tilde v  = \tilde \sigma r_u$.
To relate to the norm of the residual of the second stage note that
$
\|r_{B}\|^2  = (\|r_v\|^2 + \|r_u\|^2) / (\|\tilde v\|^2 + \|\tilde u\|^2).
$
If the Ritz vector is normalized, $\|\tilde v\| = 1$, we also obtain
$\|\tilde u\| = \|A\tilde v/\tilde \sigma\|=1$ and $r_v = 0$.
Bringing it all together (see also  \cite{WM-CS-2014-03, liang2014computing}),
\begin{equation}
\|r_u\| = \frac{\|r_{C}\|}{\tilde \sigma}  = \|r_B\| \sqrt{2}.
\end{equation} 

Given a user requirement $\|r_u\| < \delta ~\|A\|$, the normal equations 
and the augmented methods should be stopped when 
  $\|r_{C}\| < \delta ~\tilde \sigma ~\|A\|$ and
  $\|r_{B}\| < \delta ~\|A\|/\sqrt{2}$ respectively. 
A common stopping criterion for eigensolvers is $\|r_C\| < \delta_C \|C\|$, 
  so we must provide $\delta_C = \delta ~\tilde \sigma / \|A\|$.
In floating point arithmetic this may not be achievable since $\|r_{C}\|$ 
  can only be guaranteed to achieve 
  $O(\|C\| \epsilon_{mach})$ \cite{parlett1980symmetric}. 
Thus, we use 
\begin{equation}
\delta_{C}  = \max \left(\delta ~\tilde\sigma/{\|A\|}, \epsilon_{mach} \right)
\end{equation}
as the criterion for the normal equations.

First, note that for the largest $\sigma_n$, $\delta_C = \delta$ and thus
  full residual accuracy is achievable with the normal equations.
Since $\sigma\approx \tilde\sigma$, based on the Bauer-Fike bound,
$ 
|\sigma^2 - \tilde\sigma^2 | \approx 
|\sigma - \tilde\sigma | (2\tilde\sigma) \leq \|r_C\| < 
	\delta_C \|A\|^2  = \tilde\sigma \delta \|A\|$ and thus
$|\sigma - \tilde\sigma | \leq \delta \|A\|/2$
so the singular values are as accurate as can be expected.

This does not hold for smaller, and in particular the smallest few, eigenvalues. 
Thus, if the user requires $\delta < {\|A\|} \epsilon_{mach}/ \tilde\sigma$, 
  PHSVDS first makes full use of the first stage and then switches to 
  the second stage working on $B$ to resolve the remaining accuracy of
  $O(\tilde\sigma/\|A\|) < \kappa(A)^{-1}$. 
For not too ill conditioned matrices, most of the time is then spent on 
  the more efficient first stage.

A second, more subtle issue involves the accuracy of the Ritz vectors 
  from $C$ which are used as initial guesses to $B$. 
We have observed that even though their residual norms are below
  the desired tolerance, the convergence of the interior eigenvalues
  in $B$ is sometimes (but not often) irregular, with long plateaus, 
  and might not be able to reach machine precision.
This occurs when the eigenvalues are highly clustered.
On the other hand, it does not occur when only one eigenvalue is
  sought, which implies that it has to do with the sensitivity of interior 
  eigenvalues to the nearby eigenvectors that we pass as initial guesses
  \cite{Morgan1998Harmonic}.
Therefore, before we start stage two, we perform a complete Rayleigh 
  Ritz procedure with the converged eigenvectors of $C$.
Providing the new Ritz vectors as initial guesses completely cures
  this problem. 

To understand the problem as well as the solution, consider the 
  decomposition of the smallest Ritz vector $\tilde u_1$ on the exact 
  eigenvectors of $C$,
$\tilde u_1 = c_1 u_1 + \sum_{i=2}^n c_i u_i$.
On exit from the first stage, its residual satisfies 
 $\|r_{\tilde u_1}\|<\|C\|\delta_C$, and from Bauer-Fike it also holds,
 $\sqrt{1-c_1^2} < \|C\|\delta_C$ and $c_i \leq \|C\| \delta_{C}$. 
Therefore, if we omit second and higher order terms, the Rayleigh quotient 
  and the residual of $\tilde u_1$ can be written as:
\begin{eqnarray}
\mu = \frac{\tilde u_1^T A \tilde u_1}{\tilde u_1^T \tilde u_1} 
    = \lambda_1 + \sum\limits_{i=2}^n (\frac{c_i}{c_1})^2 \lambda_i, \qquad
r_{\tilde u_1} = A \tilde u_1 - \mu \tilde u_1 
    \approx \sum\limits_{i=2}^n c_i (\lambda_i - \lambda_1)u_i.
\label{eq: mu/residual of smallest eigenvalue}
\end{eqnarray}
As a result of the convergence behavior of iterative methods, 
  the $c_i$ tend to be larger for nearby eigenpairs, and fall drastically 
  as $i$ increases.
Then, from (\ref{eq: mu/residual of smallest eigenvalue}), 
  the accuracy of $\mu$ is dominated by nearby $(c_i/c_1)^2$.
The post-processing Rayleigh-Ritz uses the $k$ nearby converged 
  Ritz vectors, recomputes the projection with less floating point errors,
  and rearranges the directions to produce smaller $c_i, i=2,\ldots,k$, 
  and thus better Ritz values.
Of course, the additional Rayleigh-Ritz cannot improve the residual
  norms without the incorporation of new information in the basis.
Even in floating point arithmetic, the improvements are minimal.
This is evident also in (\ref{eq: mu/residual of smallest eigenvalue})
  where $r_{\tilde u_1}$ depends on the $c_i$ linearly, so the effect of 
  improving the nearby $c_i$ is small.
Figure \ref{fig:angle} shows these effects on a matrix 
  that presented the original problem.

\begin{figure} 
  \centering
\begin{minipage}{0.49\textwidth}
  \includegraphics[width=\textwidth]{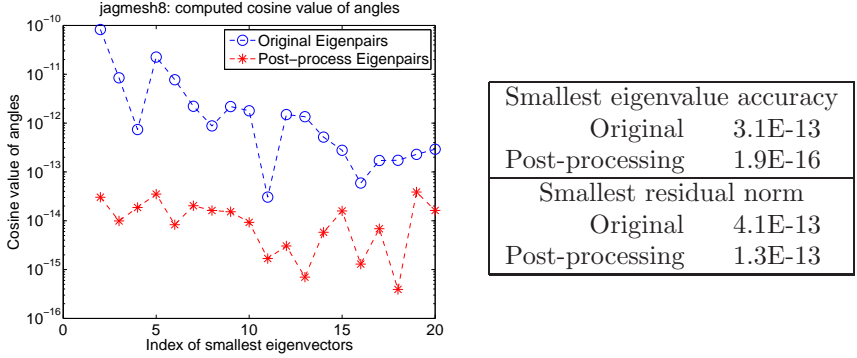}            
\end{minipage}
\begin{minipage}{0.42\textwidth}
    \mbox{\begin{tabular}{| r  c |}
    \hline
    \multicolumn{2}{|c|}{Smallest eigenvalue accuracy} \\
    Original & 3.1E-13 \\
    Post-processing & 1.9E-16 \\ \hline
    \multicolumn{2}{|c|}{Smallest residual norm} \\
    Original & 4.1E-13 \\ 
    Post-processing & 1.3E-13       \\ \hline
    \end{tabular}}
\end{minipage}

\caption{The cosine of angles ($c_i = \tilde u_1^T u_i, i=2,\ldots,19$) between 
  the smallest exact eigenvectors and the smallest Ritz vector before and after pre-processing.
  The table compares the accuracy of the Rayleigh quotient and the residual norm 
  for $\tilde u_1$ before and post-processing for matrix jagmesh8.}
  \label{fig:angle}
\end{figure}

In the second stage, the improvements on $c_i$ translate to a
  starting search space of better quality, and thus better 
  Ritz (or harmonic Ritz) pairs for the interior eigenvalue 
  problem. 
Algorithm \ref{alg: phsvds - 1st stage} summarizes the 
  interaction with the first stage eigensolver.

\begin{algorithm}
\caption{PHSVDS first stage}
\begin{algorithmic}[1]
	\STATE Call an eigesolver on $C$ to compute the required
		 ($\tilde \sigma_i^2, \tilde v_i)$ eigenpairs
	\STATE 
	Eigensolver convergence test uses dynamic tolerance 
	$\delta_{C}  = \max \left(\delta_{user} \frac{\tilde\sigma_i}{\tilde\sigma_n}, \epsilon_{mach} \right)$, where $\tilde\sigma_n$ and $\tilde\sigma_i$ are
	its current largest and targeted Ritz values
	\STATE If requested, perform Rayleigh-Ritz on the returned Ritz vector basis
\end{algorithmic}
\label{alg: phsvds - 1st stage}
\end{algorithm}

\subsection{The second stage of PHSVDS}
An eigensolver such as GD+k can be used also to compute
  interior eigenvalues of $B$ close to a given set of shifts.
However, the availability of accurate initial guesses and shifts from 
  the first phase suggest that an inner-outer iterative eigenmethod, 
  such as JDQMR, might be preferable.

We argue that solving an eigenvalue problem on matrix $B$ with an
  approximate eigenspace as initial guess is a better approach 
  than iterative refinement \cite{dongarra1983improving,berry1992large}.
First, with iterative refinement, eigenvectors are improved one by one 
  without any synergy from the nearby subspace information. 
In contrast, a subspace eigensolver 
  provides global convergence to all desired pairs.
Second, an inner-outer eigensolver such as JDQMR stops the inner linear 
  solver dynamically and near-optimally to avoid exiting too early (which 
  increases the number of outer iterations) or iterating too long 
  (which increases the number of inner iterations). 
We are not familiar with similar implementations for iterative refinement.
Third, iterative refinement for clustered interior eigenvalues may not be 
  able to converge to the desired high accuracy due to the lack of 
  proper deflation strategies \cite{arioli2014chebyshev}, both at the 
  linear solver and at the outer iteration. 
Naturally, a well designed eigensolver that employs locking and blocking 
  techniques is more robust to address these problems. 
Finally, we point out that the correction equation of the Jacobi-Davidson 
  method applied on $B^T$,
\begin{equation}
(I - w w^T) (B^T - \mu I) (I - ww^T) \tilde{t} = \tilde{\sigma} w - B^T w,
\end{equation}
  where $w^T = \left[ u^T v^T \right] $ 
is equivalent to the iterative refinement proposed in \cite{dongarra1983improving}
(\cite{hochstenbach2001jacobi}). 
Therefore, JDQMR enjoys the benefits of both eigensolvers and iterative refinement.

Using the eigenvector approximations 
  $\tilde v_i, \tilde u_i = A\tilde v_i /\tilde\sigma_i $ from the first stage, 
  we form initial guesses to insert in the search space of the JDQMR on $B$.
If the guesses are less than the minimum restart size, we fill the rest 
  of the positions with a Lanczos space from the first targeted eigenvector.
In addition, we provide the eigenvalue approximations from the first stage,
  which are typically very accurate because of Hermiticity, as shifts to JDQMR.
However, the problems with seeking interior eigenvalues are accentuated in 
  the maximally indefinite case of SVD problems. 
Spurious Ritz values can cause Ritz vectors to fail to converge \cite{stewart2001matrix} 
  or to have a detrimental effect during restart when major eigenvector components may be discarded and need to be recovered \cite{morgan1991computing, Morgan1998Harmonic, sleijpen2000jacobi}. 
We have addressed these problems as follows.
 
First, we observed that sometimes eigenvector approximations that are introduced initially in the search space but have not been targeted yet may degrade in quality or even be displaced. Thus, when an eigenvector converges and is locked out of the search space, we re-introduce the initial guess of the next vector to be targeted. This resulted in significant improvement in robustness and often in convergence speed.

Second, we introduced an efficient implementation of the refined projection 
  that minimizes the residual $\|BVy - \tilde \sigma Vy\|$ 
  for a given $\tilde \sigma$ over the search space $V$ 
  \cite{jia1997refined, stewart2001matrix}. 
Because the shifts $\tilde \sigma$ are accurate, a harmonic Ritz procedure is not necessary, and the refined one is expected to give the best approximation for the targeted eigenpair. 
Our refined projection is similar to the one in 
  \cite{hochstenbach2004harmonic,morgan1991computing} which produces refined Ritz vectors for all required eigenvectors (not just the closest to $\tilde \sigma$). 
Since $\tilde \sigma$ remains constant, there is no need to perform a QR factorization of $BV-\tilde \sigma V$ at every step.
Instead, as part of Gram-Schmidt, we update the factorization matrices
  $Q$ and $R$ with a new column.
A full QR factorization is only needed at restart. Then, following \cite{stewart2001matrix}, we compute the refined Ritz vectors by solving the small SVD problem with $R$, and replace the targeted Ritz value with the Rayleigh quotient of the first refined Ritz vector.

\begin{algorithm}
\caption{PHSVDS second stage: necessary enhancements in Jacobi-Davidson}
\begin{algorithmic}[1]
\STATE Initial shifts $\tilde \sigma_i$, initial vectors 
	$[\tilde v_{i}\ ; A\tilde v_{i}/\tilde \sigma_{i} ], i=1,...,k$,~
	$qr\_full = 1, ~j=s+1$
\STATE Build an orthonormal basis $V$ of $[K_{s-k}(B, \tilde v_1), \tilde v_i]$. Set $t$ as the Lanczos residual
\WHILE{all $k$ eigenvalues have not converged}
	\STATE Orthonormalize $t$ against $V$.
		Update $v_{j} = t, w_{j} = Bv_{j}, H_{:,j} = V^{T}w_{j}$
	\IF {$qr\_{full} == 1$}
		\STATE $W - \tilde \sigma_{1} V = QR, \ qr\_{full} = 0$
	\ELSE
		\STATE During orthogonalization update $QR$ by one-column
	\ENDIF
	\STATE Compute eigendecomposition $H = S\Theta S^{T}$ with $\theta_{j}$ ordered by closeness to $\tilde \sigma_1$ 
	\STATE Compute SVD decomposition of $R = U\Sigma S^{T}$, Rayleigh quotient $\theta_{1} = s_{1}^{T}Hs_{1}$
	\STATE If $(\sigma_{i},[v_i; u_i])$ converged, lock, and re-introduce 
		$[\tilde v_{i+1}\ ; A\tilde v_{i+1}/\tilde \sigma_{i+1}  ] $ into $V$
	\STATE If restarting, set $qr\_{full} = 1$
	\STATE Obtain the next vector $t = Prec(r)$ (e.g., by solving the correction equation)
\ENDWHILE
\end{algorithmic}
\label{alg: phsvds - 2nd stage}
\end{algorithm}

Solving the small SVD problem in each iteration for only the targeted shift 
  reduces the cost of the refined procedure considerably, making it similar 
  to the cost of computing the Ritz vectors. 
However, the quality of non-targeted refined Ritz vectors reduces with the 
  distance of their Rayleigh quotient from $\tilde \sigma$, so they may not 
  be as effective in a block algorithm.
Yet, these approximations have the desirable property of monotonic convergence as claimed in \cite{hochstenbach2004harmonic,morgan1991computing} and also observed in our experiments. This added robustness for JDQMR more than justifies the small additional cost.
Algorithm \ref{alg: phsvds - 2nd stage} summarizes the second 
  stage modifications in the context of JD.

\subsection{Outline of the implementation}
We have implemented the PHSVDS meta-method as a MATLAB function 
  on top of PRIMME.
This allowed us flexibility for algorithmically tuning the two stages
  and experimenting with various eigensolvers.
We first developed a MATLAB MEX interface for PRIMME,
  which exposes its full functionality to a broader class of users, 
  who can now take advantage of MATLAB's built-in matrix times block-of-vectors
  operators and preconditioners. 
Its user interface is similar to MATLAB {\tt eigs} and it is fully tunable.
Many of the enhancements, such as 
  the refined projection method or a user provided stopping criterion, 
  were implemented directly in PRIMME and will be part of 
its next release.
We are currently working on a native C implementation of PHSVDS in PRIMME.

PHSVDS expects an input matrix $A$, or a matrix function that 
  performs matrix-vector operations with $A$ and $A^T$, 
  or directly with $B$ and/or $C$.
Then, it sets up the matrix-vector functions and calls 
  Algorithm \ref{alg: phsvds - 1st stage}.
The returned approximations are provided as initial guesses to 
  Algorithm \ref{alg: phsvds - 2nd stage}.
One exception is when the singular value is extremely small or zero, 
  so the first stage yields no accuracy for $\tilde u_i$.
In this extreme case, it is better to choose $\tilde u_i$ as a 
  random vector.
For tiny and highly clustered singular values, eigensolvers with 
  locking and block features are preferable.

\section{Preconditioning in PHSVDS}

The shift-invert technique is sometimes thought of as a form of 
  preconditioning. 
If a factorization of $A, C$ or $B$ is possible, this
  is often the method of choice for highly clustered or indefinite 
  eigenproblems.
For smallest singular values, MATLAB {\tt svds} relies
  solely on shift-invert ARPACK \cite{lehoucq1998arpack} using the LU factorization of $B$.
PROPACK \cite{larsen1998lanczos,larsen2001combining} uses a QR factorization of $C$. 
However, for rectangular matrices {\tt svds} often converges to 
  the zero eigenvalues of $B$ rather than the smallest singular 
  value (see \cite{liang2014computing}).
Our method can also be used in shift-invert mode, assuming the 
  user provides the inverted operator as a matrix-vector.
For large matrices, however, preconditioners become a necessary 
  alternative.

JDSVD accepts a preconditioner for a square matrix $A$ or, if $A$ is 
  rectangular, leverages a preconditioner for $B - \tau I$ \cite{hochstenbach2001jacobi}. 
SVDIFP includes by default the robust incomplete factorization (RIF) 
  method \cite{benzi2003robust} to provide a preconditioner
  directly for $C - \tau I$ without forming $C$ \cite{liang2014computing}. 
The advantage is that it works seamlessly for both square and 
  rectangular matrices, but RIF may not be the best choice of
  preconditioner.

PHSVDS accepts any user-provided preconditioning operator that
  the underlying eigensolver allows.
In the most general form, any preconditioner directly for $C$ or $B$
  can be used.
When $M\approx A$ is available (e.g., the incomplete LU factorization 
  of a square matrix), PHSVDS forms $M^{-1}M^{-T}$ and 
  $[0 \ M^{-1};M^{-T} \ 0]$ as the preconditioning operators for the different stages.
Moreover, if a preconditioner such as RIF is given, $M\approx C^{-1}$, 
  we can build preconditioners for the second stage as $[0\ AM ; MA^T\ 0]$.
It is not clear in general how to form a preconditioner for $C$ from a 
  preconditioner of $B$.

\subsection{A dynamic two stage method with preconditioning}
The analysis in Sections \ref{sec:asymptotic} and \ref{sec:subspaces}
  holds for Krylov methods but it is less meaningful with preconditioners.
Clearly, if two different preconditioners are provided for $C$ and $B$
  their relative strengths are not known by PHSVDS.
But we have also noticed cases where the first stage benefits less 
  than the second stage when a less powerful preconditioner $M$ for $A$
  is used to form preconditioners for both $C$ and $B$.
If $M$ is ill-conditioned but its near-kernel space does not correspond
  well to that of $A$, it may work for $B$, but taking $M^TM$ 
  produces an unstable preconditioner for $C$ \cite{saad2001enhanced}. 
On the other hand, with a sufficiently good preconditioner, both methods 
  enjoy similar benefits on convergence.
If the relative strengths of the provided preconditioner are known,
  users can choose the two-stage approach or only one of the 
  stages (e.g., the second one). 
For the general case, we present a method that, based on runtime 
  measurements, switches dynamically between the normal equations and the 
  augmented approach to identify the most effective one for the
  given preconditioning. 
This is shown in Algorithm \ref{alg: phsvds with preconditioning}.
  
To estimate the convergence of the two approaches, we run a set 
  of initial tests alternating between running on $C$ and on $B$.
Because JDQMR relies on good initial guesses which are not available initially, 
  the dynamic algorithm uses only the GD+k method.
Once the algorithm decides on the approach, any eigenmethod that allows for
  preconditioning can be used.
Without loss of generality, we only consider GD+k for our 
  dynamic PHSVDS experiments.
The approximations obtained from one run are passed as initial guesses to the 
  next run.

We estimate the convergence rate by measuring the 
  average reduction per iteration of the residual norm.
To capture the convergence at different phases of 
  the iterative method, we must switch between the two approaches
  several times.
However, switching too frequently incurs a lot of overhead (rebuilding the initial 
  basis, performing extra Rayleigh-Ritz procedures, and possibly 
  convergence loss from restarting the search space).
Switching too infrequently may be wasteful when the preconditioner for $C$
  does not work well.
Thus, we control the maximum number of iterations for the next GD+k run, 
  $maxIter$.
This number is always larger than $initIter$ which is a reasonably 
  small number, i.e., 50. 
If the same approach is chosen in two successive runs, $maxIter$ doubles.
If the approach should be switched, $maxIter$ is reduced
  more aggressively for the next run (Step 12) 
  to avoid wasting too much time on the wrong approach.
If one eigenvalue converges in the initial tests 
  or at least two eigenvalues converge later, we stop the dynamic
  switching and choose the currently faster approach.
If the faster approach is the normal equations, a two-stage method
  might be necessary to get to full accuracy.
Although two or three switches typically suffice to distinguish between approaches,
  we also limit the number of switches.

\begin{algorithm}
\caption{Dynamic switching between stages for preconditioned PHSVDS}
\begin{algorithmic}[1]
	\STATE Set $initIter$, $maxSwitch$ 
	\STATE $numSwitch=numConverged=j=0, maxIter=initIter, Undecided = true$
	\STATE Run $initIter$ iterations of an eigensolver (such as GD+k) on $C$ and on $B$ and 
 		collect initial average convergence rate of both approaches.

	\WHILE{($numSwitch < maxSwitch$ and  $Undecided$)}
		\STATE Choose estimated faster approach ($C$ or $B$) for next call
		\IF{($numSwitch == 0$ {\bf and} $numConverged > 0)$
			{\bf or} $numConverged > 1$}
			\STATE undecided = false (Choose faster approach and no more switching)
		\ELSE
			\IF{Same approach is chosen again}
				\STATE $j=j+1;\ \ maxIter = initIter \ast 2^j$
	    		\ELSIF{Different approach is chosen}
	    			\STATE $j=\mbox{floor}(j/2);\ \ 
					maxIter = initIter \ast 2^j$
	    		\ENDIF
	    	\ENDIF
		\STATE $numSwitch=numSwitch+1$
		\STATE Call the eigensolver with $maxIter$ and current chosen approach
    	\ENDWHILE
	\IF{All desired singular triplets are found on $B$}
		    \STATE Return final singular triplets to users
	\ELSIF{All desired singular triplets are found on $C$}
		    \STATE Return resulting singular triplets to augmented approach
	\ELSIF{Faster approach is on $C$}
		    \STATE Proceed with the two-stage approach
	\ELSIF{Faster approach is on $B$}
		\STATE Continue only with the augmented approach
	\ENDIF

\end{algorithmic}
\label{alg: phsvds with preconditioning}
\end{algorithm}

\section{Numerical experiments}
Our first two experiments use diagonal matrices to demonstrate the principle 
  of the two stage method and that the method can compute 
  artificially clustered tiny singular values to full accuracy.
Then, we conduct an extensive set of experiments for finding the smallest 
  singular values of several matrices. 
Large singular values are also computed under the shift-invert setting.
The matrix set is chosen to overlap with those in other papers 
  in the literature.
We compare against several state-of-the-art SVD methods: 
	JDSVD \cite{hochstenbach2001jacobi,hochstenbach2004harmonic}, 
	SVDIFP \cite{liang2014computing}, 
	IRRHLB \cite{jia2010refined}, 
	IRLBA \cite{baglama2005augmented},  
	lansvd \cite{larsen2001combining},
    and MATLAB's {\tt svds}.
All methods are implemented in MATLAB.
First, we compute a few of the smallest singular triplets on both 
  square and rectangular matrices without a preconditioner. 
Then we show show the effect of the dynamic PHSVDS for 
  different quality of preconditioners, 
  and demonstrate that PHSVDS provides faster convergence over 
  other methods on these test matrices as well as on some 
  large scale problems.

All computations are carried out on a DELL dual socket with Intel Xeon processors 
  at 2.93GHz for a total of 16 cores and 50 GB of memory running the SUSE Linux 
  operating system.
We use MATLAB 2013a with machine precision $\epsilon = 2.2 \times 10^{-16}$ 
  and PRIMME is linked to the BLAS and LAPACK libraries available in MATLAB.
Our stopping criterion requires that the left and right residuals satisfy,
\begin{equation}
\sqrt{\|r_u\|^2 + \|r_v\|^2} < \|A\| \delta_{user}.
\end{equation}

For JDSVD we use the refined projection method as it performed best in 
  our experiments, which is also consistent with \cite{hochstenbach2001jacobi}. 
We choose the default for all parameters except for setting 'krylov = 0' 
  to avoid occasional convergence problems for smallest singular values.
For SVDIFP the maximum number of inner iterations can be chosen 
  as fixed or adaptive. 
We run with both choices and report the best result. 
Also, we modify slightly the code to use singular triplet residuals 
  as the stopping criterion for SVDIFP 
  instead of the default residual of the normal equations. 
For IRLBA and IRRHLB, we choose all default parameters as suggested in the code. 

All methods start with the same initial guess, {\tt ones(min(m,n),1)},
   except for matrix lshp3025 for which a random guess is necessary. 
We set the maximum number of restarts to 5000 for IRRHLB and IRLBA 
  and to 10000 for JDSVD and PHSVDS. 
Since SVDIFP can only set a maximum number of iterations for each 
  targeted singular triplet, we report that SVDIFP cannot converge 
  to all desired singular values if its overall number of matrix-vector 
  operations is larger than $(maxBasisSize - k)*5000$. 
For PHSVDS, we set maxBasisSize=35, minRestartSize=21 and experiment 
  with two $\delta$ tolerances, 1E-8 and 1E-14. 
For $\delta = $1E-8, PHSVDS does not need to enter the second stage 
  for any of our tests. For the first stage of PHSVDS we use the GD\_Olsen\_PlusK method. 
For the second stage, we run experiments with both GD\_PlusK and JDQMR. 
Since our implementation is mainly in C, we compare the number of 
  matrix-vector operations as the primary measurement of the performance.
However, we also report execution times which is relevant since matrix-vector, 
  preconditioner, and all BLAS/LAPACK operations are performed by 
  the MATLAB libraries.
  
Since the numbers of matrix-vector products with $A$ and $A^T$ are the same, the tables report as ``MV'' the number of products with $A$ only. ``Sec'' is the run time in seconds, and ``--'' means the method cannot converge to all desired singular values or that the code breaks down. 
Bargraphs report the ratio of matvecs and time of each method over 
  the PHSVDS method that uses the first stage only or JDQMR in stage two.
Ratio values are truncated to less than 10, and empty bar means 
  the same as ``--'' in the tables.

\subsection{PHSVDS for clustered tiny singular values}
We illustrate first how our two-stage method works in a seamless manner. 
We consider a diagonal matrix $A$ = diag([1:10,1000:100:1E6]) and the
  preconditioner $M$ = $A$ + diag(rand(1,10000)*1E4). 
In Figure \ref{fig: ATAtoOAAOswitch}, the green and black lines 
  show the convergence behaviors of GD+k on $B$ and $C$ respectively.
Indeed, the convergence on $B$ is very slow due to a highly indefinite problem 
  while the accuracy on $C$ stagnates when reaching its limit.
The two stage PHSVDS combines the benefits of the two methods, and 
  determines the smallest singular value efficiently and accurately as the magenta line shows.

\begin{figure}[bht]
\centering
\includegraphics[width=0.49\textwidth]{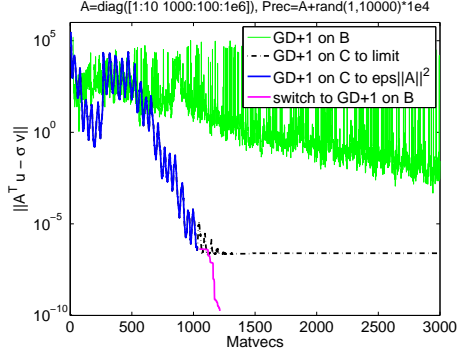}
\caption{Two stages of PHSVDS working seamlessly to find smallest singular values accurately.}
\label{fig: ATAtoOAAOswitch}
\end{figure}

Next we show how PHSVDS can determine several clustered tiny 
  singular values.
Consider the matrix $A$ = 
diag([1E-14, 1E-12, 1E-8:1E-8:4E-8, 1E-3:1E-3:1]) 
  with identity matrix as a preconditioner. 
Although locking may be able to determine clustered or multiple singular values, 
  we increase robustness by using a block size of two for the first stage only.
We set the user tolerance $\delta_{user}$ = 1E-15 to examine the 
  ultimate accuracy of PHSVDS. 
As shown in Table \ref{ta: ExpA}, PHSVDS is capable of computing 
  all the desired clustered, tiny singular values accurately.

\begin{table}[htbp]
\centering
\caption{Computation of 10 clustered smallest singular values by PHSVDS with block size 2.}
\label{ta: ExpA}
\small
\begin{center}
    \begin{tabular}{ |c|c|c|}
    \hline
 	     $\sigma_i$  & PHSVDS $\tilde \sigma_i$ & $\|r_B\|$ \\ \hline 
                 1E-14& 9.952750930151887E-15 & 4.9E-16 \\ \hline 
                 1E-12& 1.000014102726476E-12 & 8.9E-16 \\ \hline 
                 1E-8 & 1.000000003582006E-08 & 6.5E-16 \\ \hline 
                 2E-8 & 2.000000002073312E-08 & 9.2E-16 \\ \hline 
                 3E-8 & 3.000000000893043E-08 & 9.0E-16 \\ \hline 
    \end{tabular}
    \begin{tabular}{ |c|c|c|}
    \hline
 	     $\sigma_i$  & PHSVDS $\tilde \sigma_i$ & $\|r_B\|$ \\ \hline 
                 4E-8 & 4.000000001929591E-08 & 9.2E-16 \\ \hline 
                 1E-3 & 1.000000000000025E-03 & 9.7E-16 \\ \hline 
                 2E-3 & 1.999999999999956E-03 & 8.1E-16 \\ \hline 
                 3E-3 & 2.999999999999997E-03 & 9.1E-16 \\ \hline 
                 4E-3 & 3.999999999999958E-03 & 9.8E-16 \\ \hline 
    \end{tabular}
\end{center}
\end{table}

\subsection{Without preconditioning}
We compare two variants of PHSVDS with four methods, 
  JDSVD, SVDIFP, IRRHLB, and IRLBA on both square and rectangular 
  matrices without preconditioning. 
Since a good preconditioner is usually not easy to obtain for SVD problems, 
  it is important to examine the effectiveness of a method in this case. 
We compute $k=1, 10$ smallest singular triplets (see \cite{WM-CS-2014-06} 
  for a more detailed and extensive set of results).
In order to speed up the convergence of {\tt svds}, IRRHLB and IRLBA, 
  we compute $k+3$ eigenvalues when $k$ eigenvalues are required.
For PHSVDS, we found this is not necessary.

\begin{table}[htbp]
\centering
\caption{Properties of the test matrices. 
  $\gamma_{m}(k) = \min_{i=1}^k (gap(\sigma_i))$, and $gap(\sigma_i) = min_{j \neq i} |\sigma_{i} - \sigma_j|$.}
\label{ta: Square and rectangular matrices}
\small
\begin{center}
    \begin{tabular}{ |c|c|c|c|c|c|c|c|c|}
    \hline
    Matrix 			& pde2961 & dw2048 & fidap4 & jagmesh8 & wang3 & lshp3025 \\ \hline 
    order 			& 2961	 & 2048   & 1601   & 1141   & 26064  & 3025   \\ \hline
    nnz(A) 		    & 14585  & 10114  & 31837  & 7465   & 77168  & 120833  \\ \hline
    $\kappa(A)$ 		& 9.5E+2  & 5.3E+3  & 5.2E+3  & 5.9E+4  & 1.1E+4  & 2.2E+5   \\ \hline
    $\|A\|_{2}$ 		& 1.0E+1  & 1.0E+0  & 1.6E+0  & 6.8E+0  & 2.7E-1 & 7.0E+0  \\ \hline
    $\gamma_m(1)$  & 8.2E-3 & 2.6E-3 & 1.5E-3 & 1.7E-3 & 7.4E-5 & 1.8E-3  \\ \hline
    $\gamma_m(5)$  & 2.4E-3 & 2.9E-4 & 2.5E-4 & 4.8E-5 & 1.9E-5 & 1.8E-4  \\ \hline
    $\gamma_m(10)$ & 7.0E-4 & 1.6E-4 & 2.5E-4 & 4.8E-5 & 6.6E-6 & 2.2E-5  \\ \hline
    \end{tabular}
\end{center}

\small
\begin{center}
    \begin{tabular}{ |c|c|c|c|c|c|c|}
    \hline
    Matrix 			& well1850 & lp\_ganges & deter4 & plddb & ch  & lp\_bnl2  \\ \hline
    rows $m$: 			& 1850 & 1309 & 3235 & 3049  & 3700 & 2324  \\ 
    cols $n$: 			&  712 & 1706 & 9133 & 5069 & 8291 & 4486   \\ \hline
    nnz(A) 			& 8755   & 6937  & 19231  & 10839  & 24102   & 14996  \\ \hline
    $\kappa(A)$ 		& 1.1E+2  & 2.1E+4 & 3.7E+2  & 1.2E+4  & 2.8E+3  & 7.8E+3  \\ \hline
    $\|A\|_{2}$ 		& 1.8E+0  & 4.0E+0  & 1.0E+1  & 1.4E+2  & 7.6E+2  & 2.1E+2  \\ \hline
    $\gamma_m(1)$  & 3.0E-3 & 1.1E-1 & 1.1E-1 & 4.2E-3 & 1.6E-3 & 7.1E-3 \\ \hline
    $\gamma_m(5)$  & 3.0E-3 & 2.4E-3 & 8.9E-5 & 5.1E-5 & 3.6E-4 & 1.1E-3 \\ \hline
    $\gamma_m(10)$ & 2.6E-3 & 8.0E-5 & 8.9E-5 & 2.0E-5 & 4.0E-5 & 1.1E-3 \\ \hline
    \end{tabular}
\end{center}
\end{table}

We select six square and six rectangular matrices from other research papers \cite{jia2010refined, liang2014computing} and the University of Florida Sparse Matrix Collections \cite{davis2011university}. Table \ref{ta: Square and rectangular matrices} lists these matrices along with some of their basic properties. 
Among them, the matrices pde2961, dw2048, well1850 and lp\_ganges have relative larger gap ratios and smaller condition number, and thereby are easy ones.
Matrices fidap4, jagmesh8, wang3, deter4, and plddb are hard cases, 
  and matrices lshp3025, ch, and lp\_bnl2 are very hard ones. 
We expect all methods to perform well for solving easy problems. 
Harder problems tend to magnify the difference between methods.

\begin{figure}[h]
  \centering
 \subfigure[Matvec Ratio]{\includegraphics[width=0.49\textwidth]{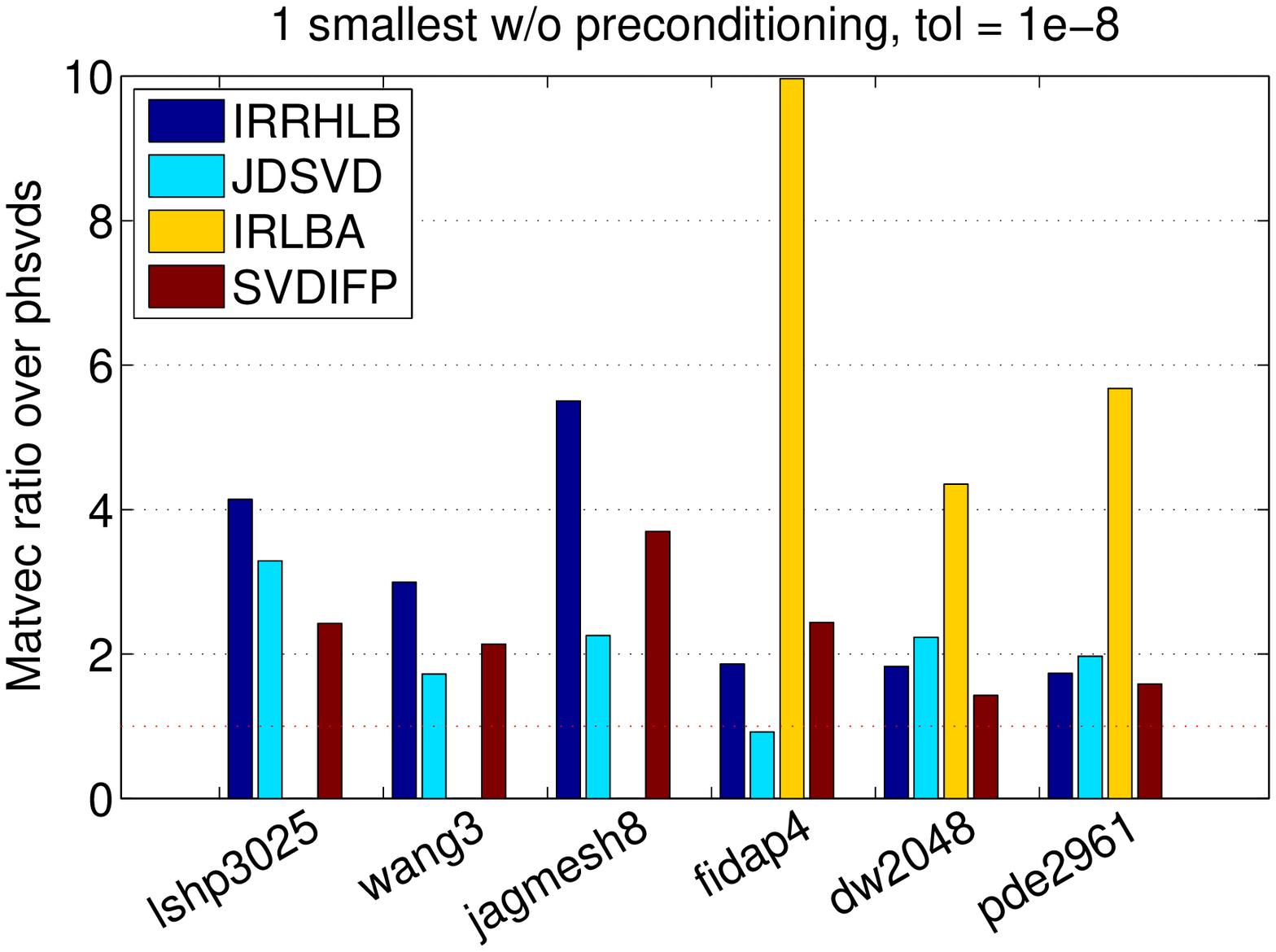}}
 \subfigure[Matvec Ratio]{\includegraphics[width=0.49\textwidth]{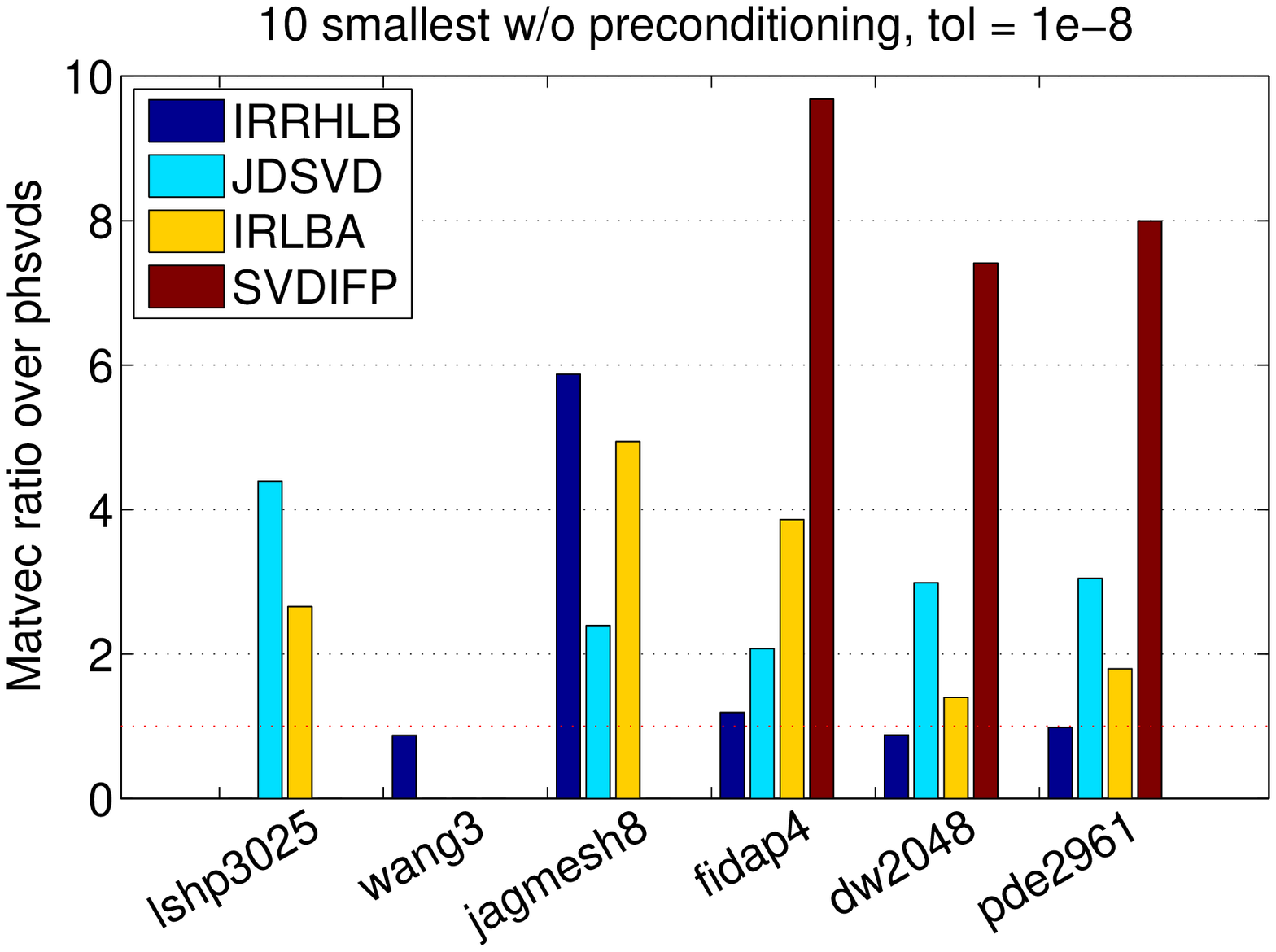}}
 \subfigure[Time Ratio]{\includegraphics[width=0.49\textwidth]{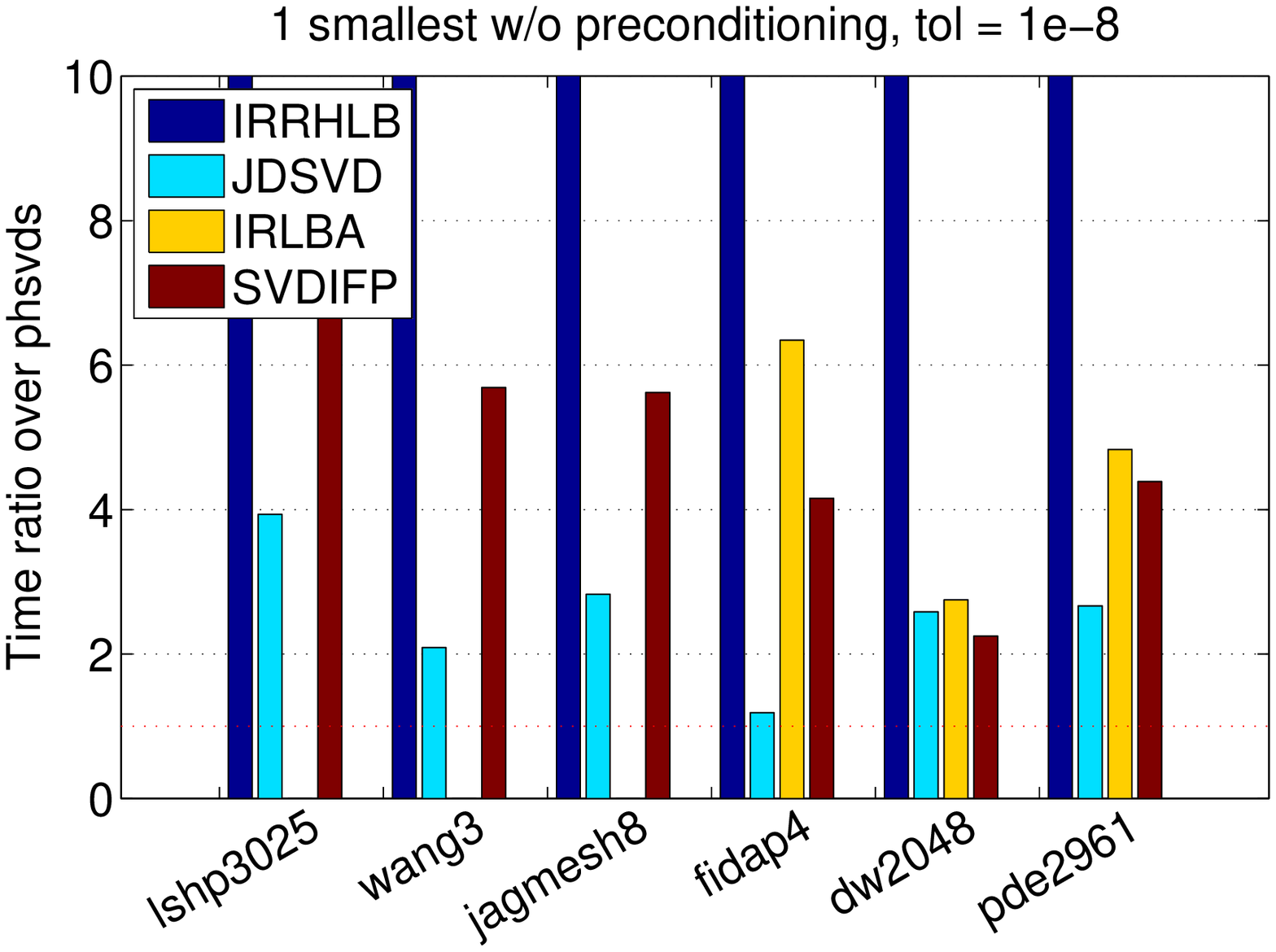}}                
 \subfigure[Time Ratio]{\includegraphics[width=0.49\textwidth]{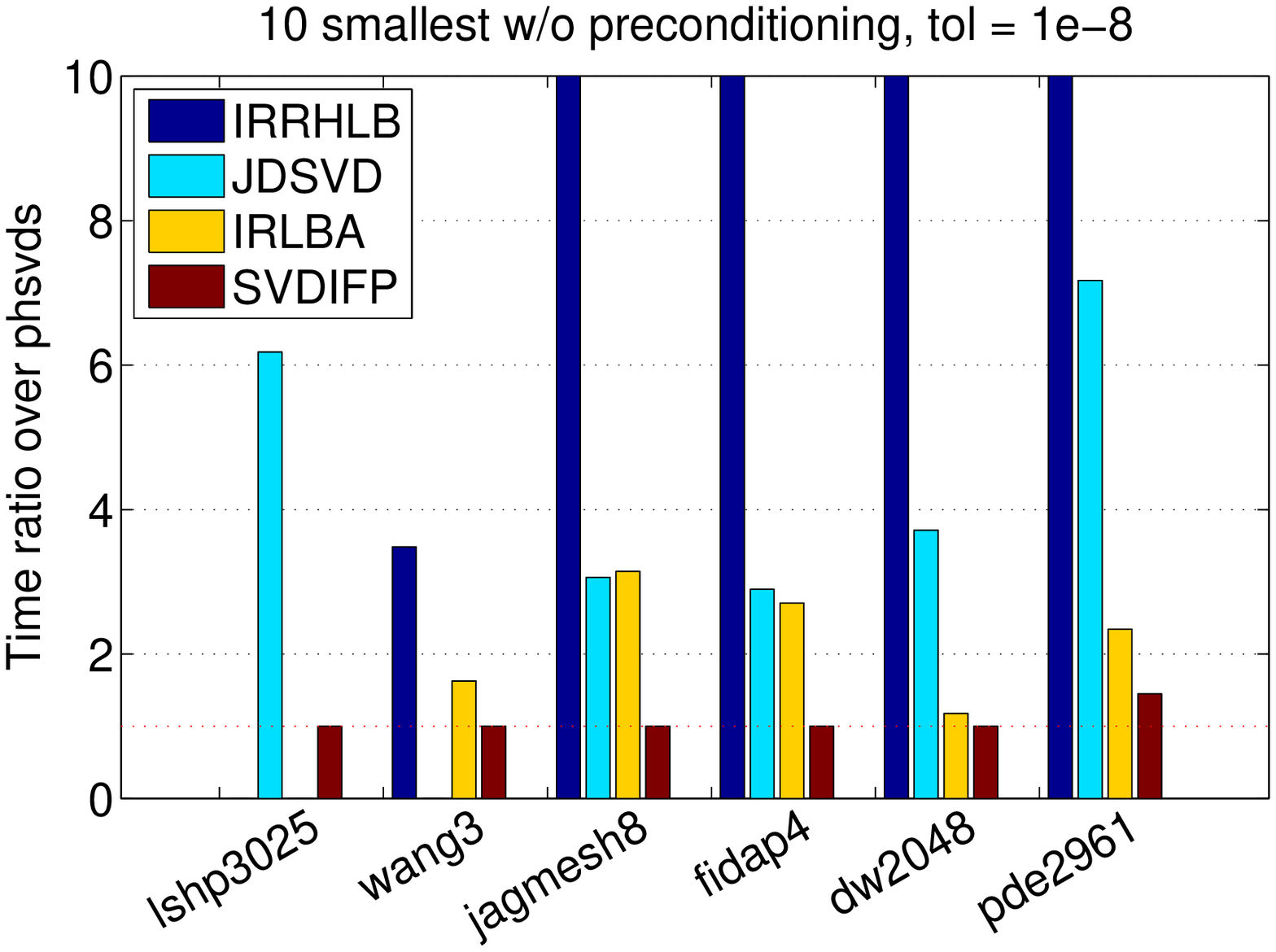}}   
\caption{Matvec and time ratios over PHSVDS(1st stage only) when seeking 1 and 10 smallest singular triplets of square matrices with user tolerance 1E-8.}
\label{fig: ExpB-square-1e-8}
\end{figure}

\begin{figure}[h]
  \centering
 \subfigure[Matvec Ratio]{\includegraphics[width=0.49\textwidth]{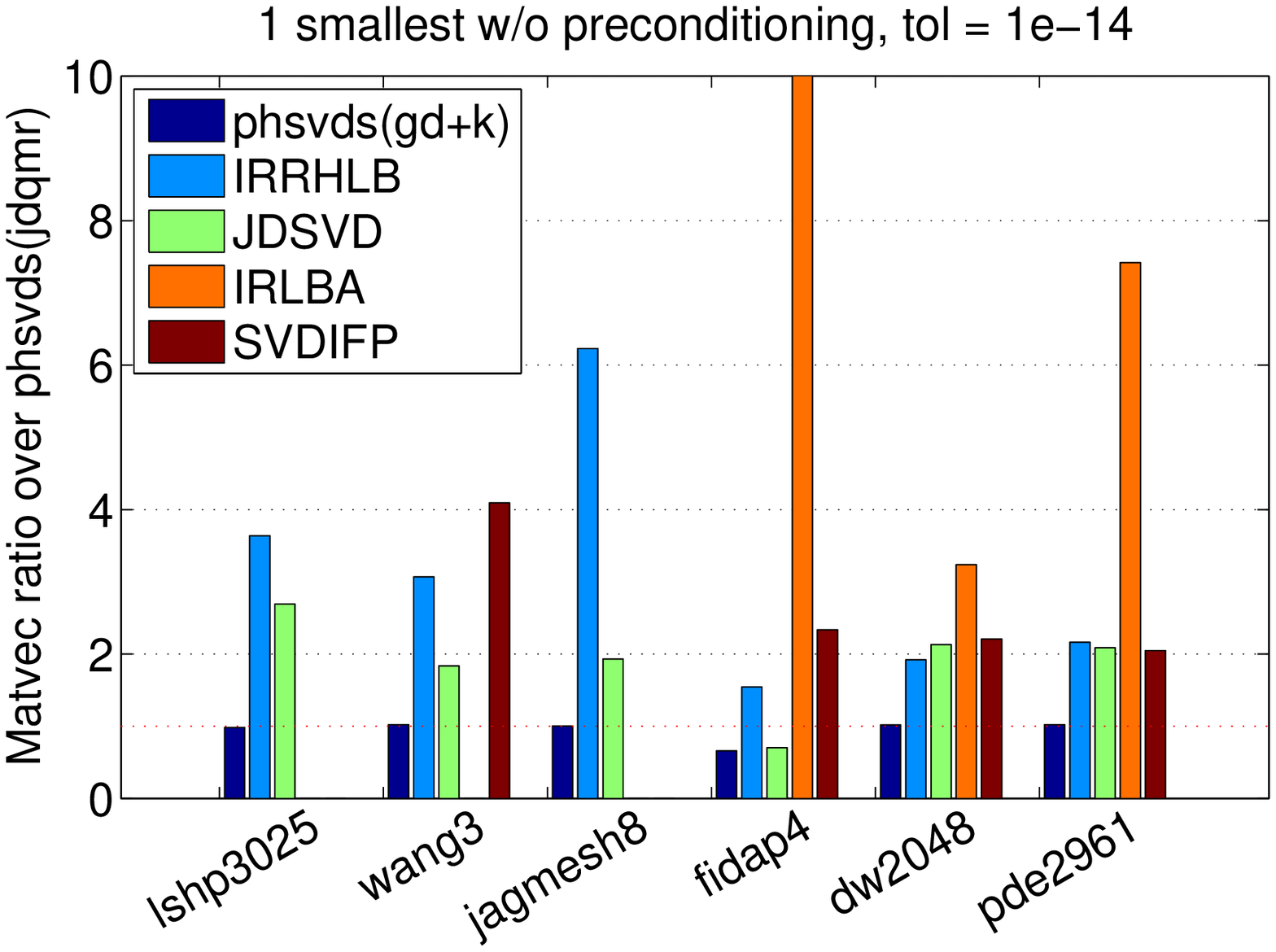}}
 \subfigure[Matvec Ratio]{\includegraphics[width=0.49\textwidth]{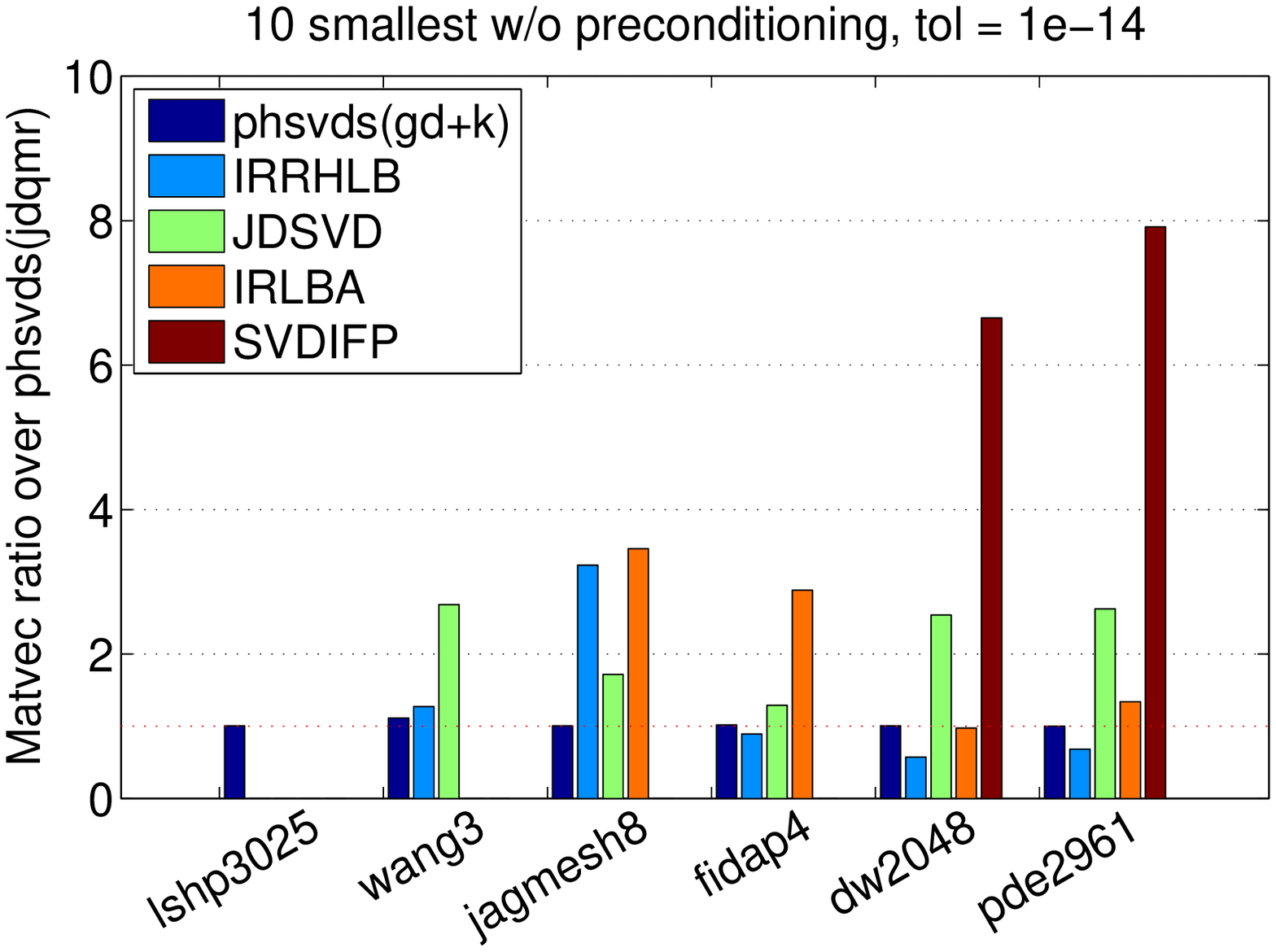}}
 \subfigure[Time Ratio]{\includegraphics[width=0.49\textwidth]{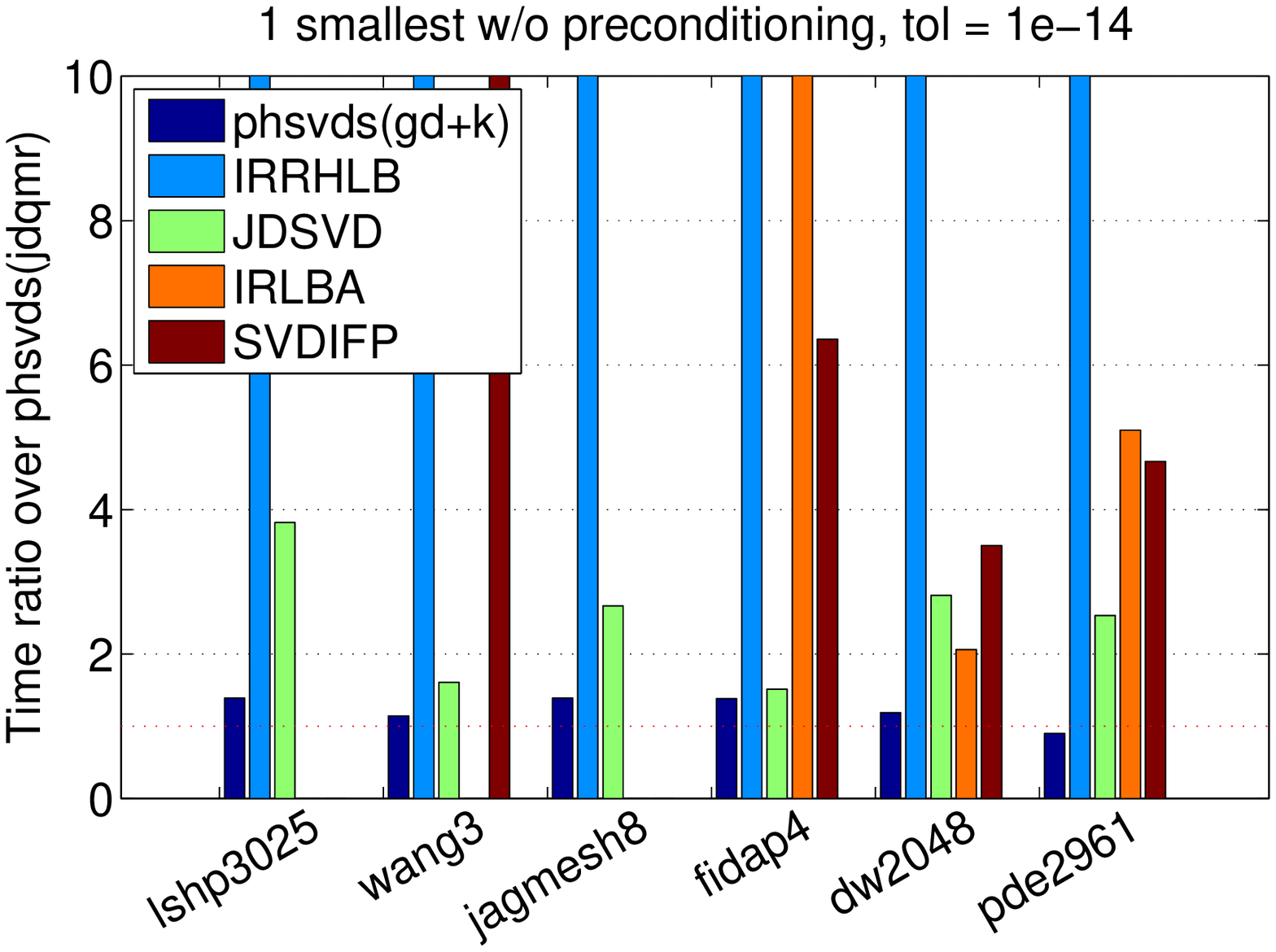}}                
 \subfigure[Time Ratio]{\includegraphics[width=0.49\textwidth]{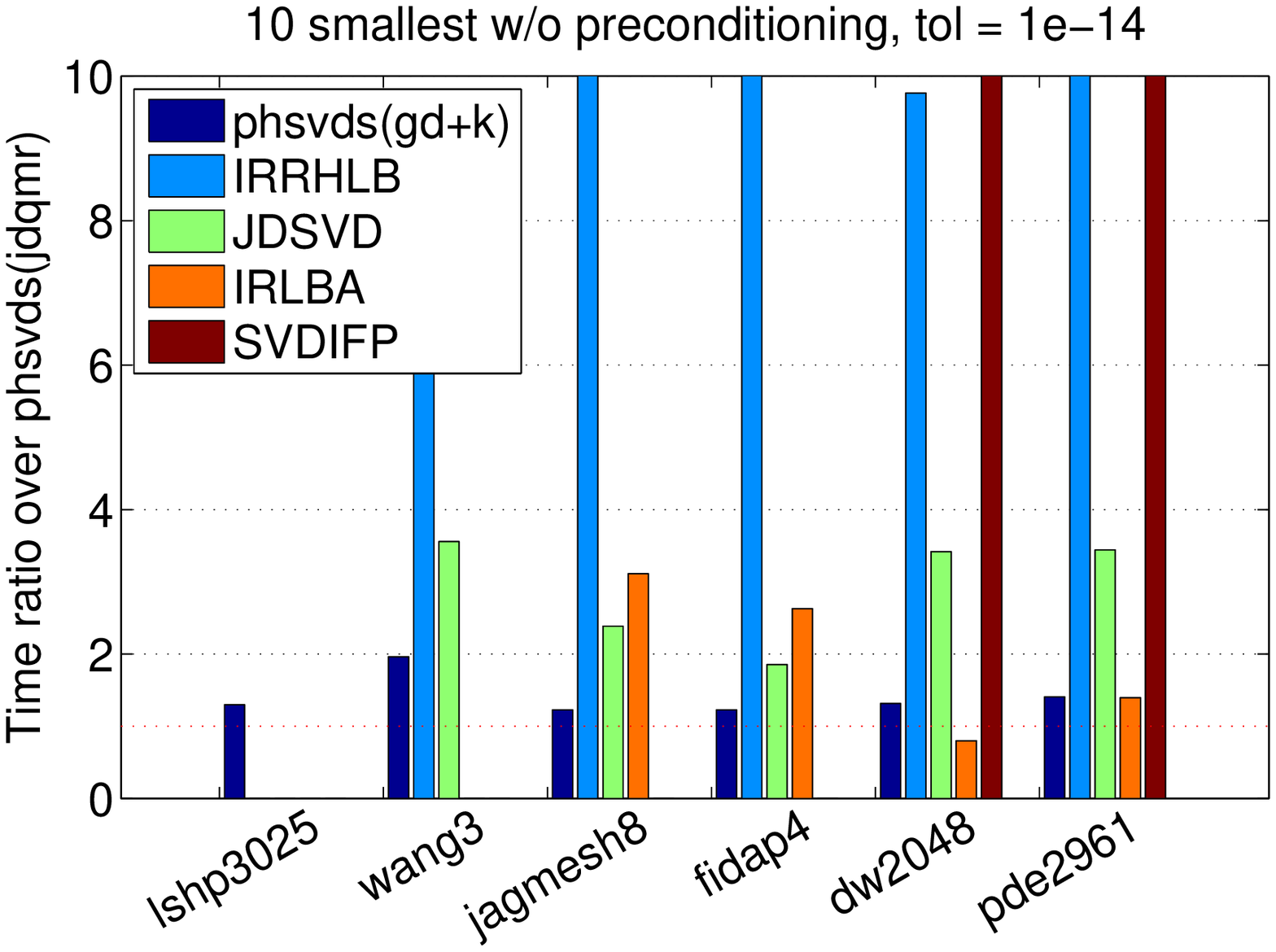}}   
\caption{Matvec and time ratios over PHSVDS(JDQMR) when seeking 1 and 10 smallest singular triplets of square matrices with user tolerance 1E-14. 
The PHSVDS(GD+k) variant uses the GD+k eigenmethod in the second stage.}
\label{fig: ExpB-square-1e-14}
\end{figure}

\begin{figure}[h]
  \centering
 \subfigure[Matvec Ratio]{\includegraphics[width=0.49\textwidth]{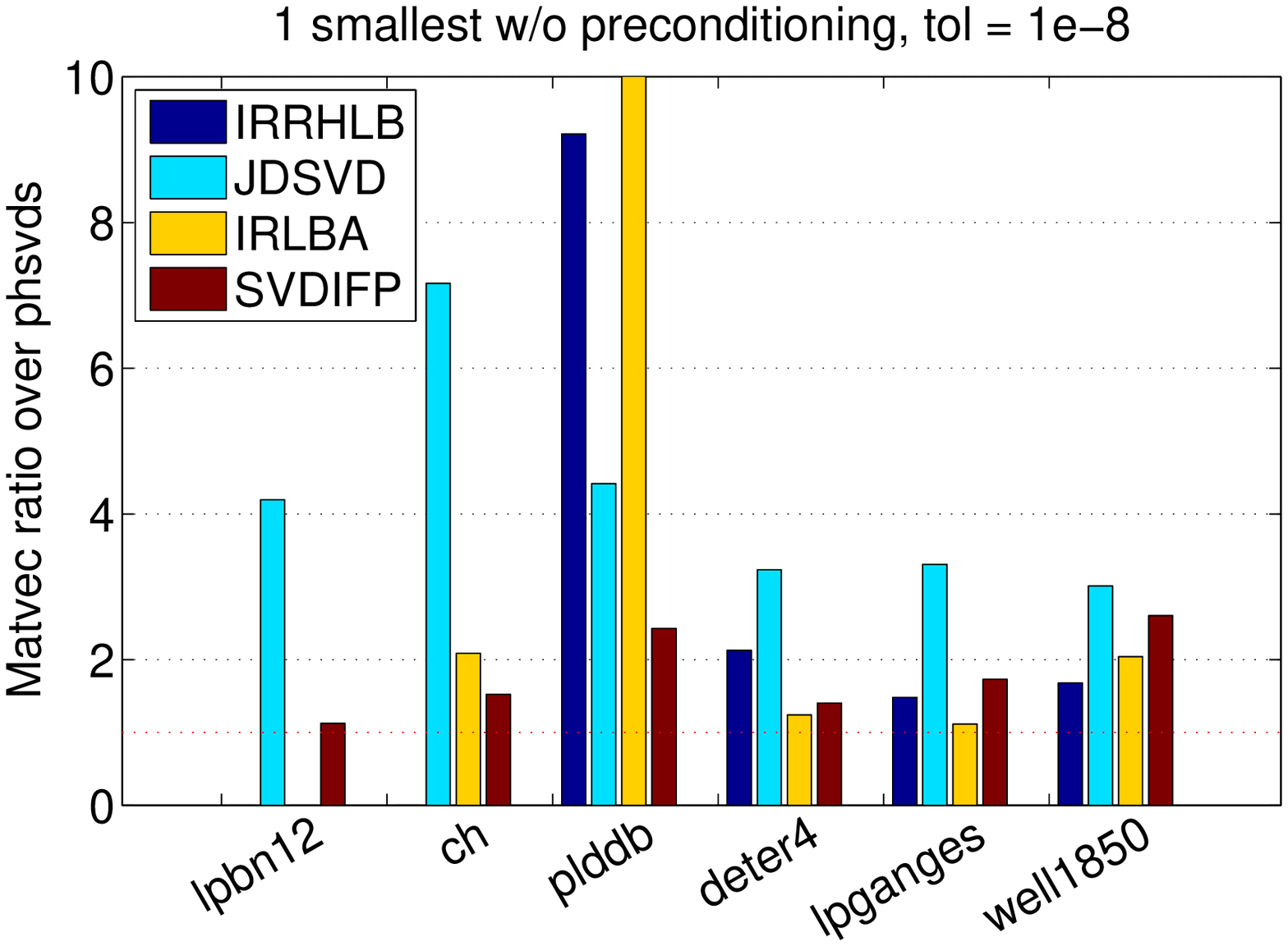}}
 \subfigure[Matvec Ratio]{\includegraphics[width=0.49\textwidth]{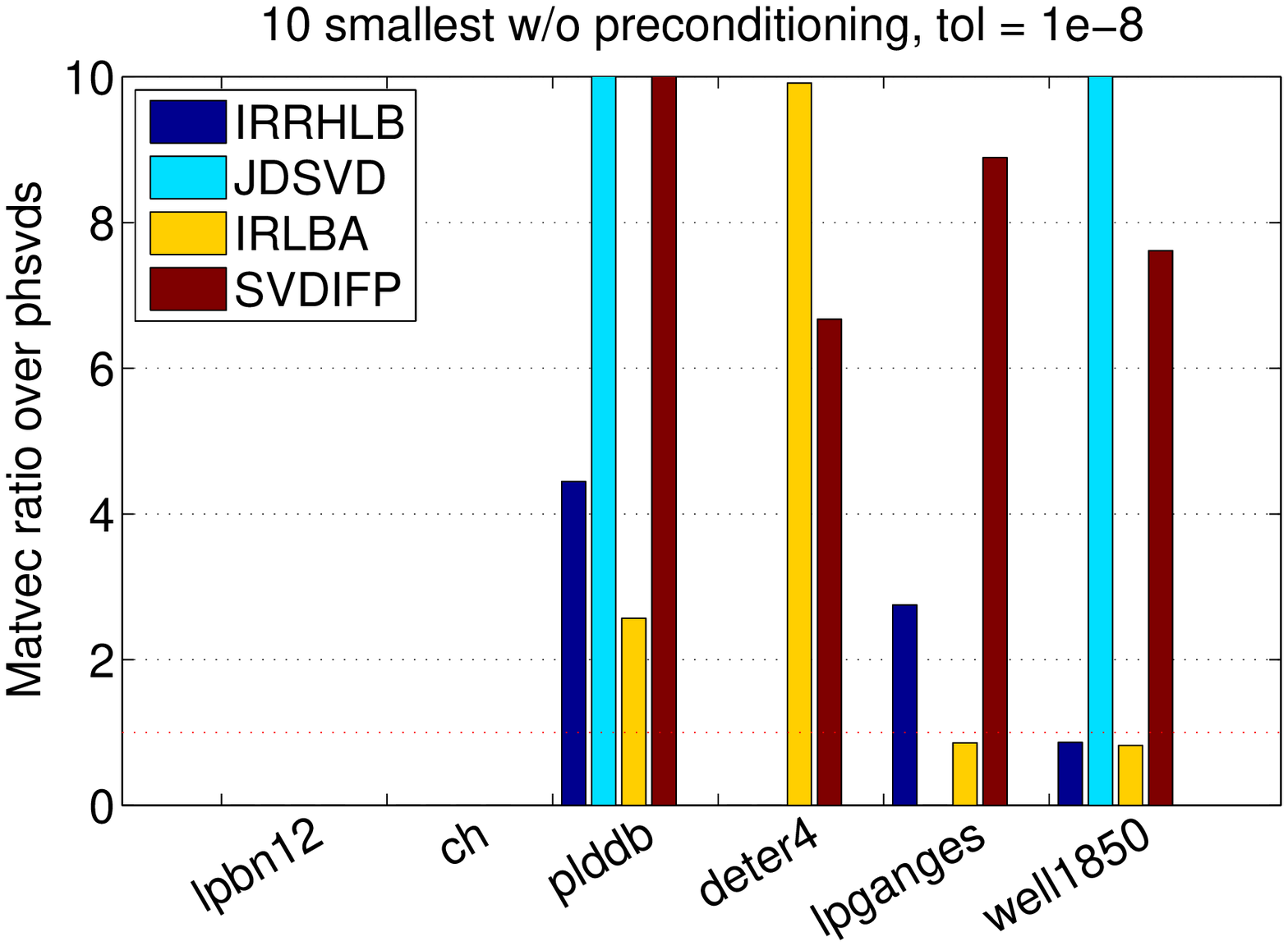}}
 \subfigure[Time Ratio]{\includegraphics[width=0.49\textwidth]{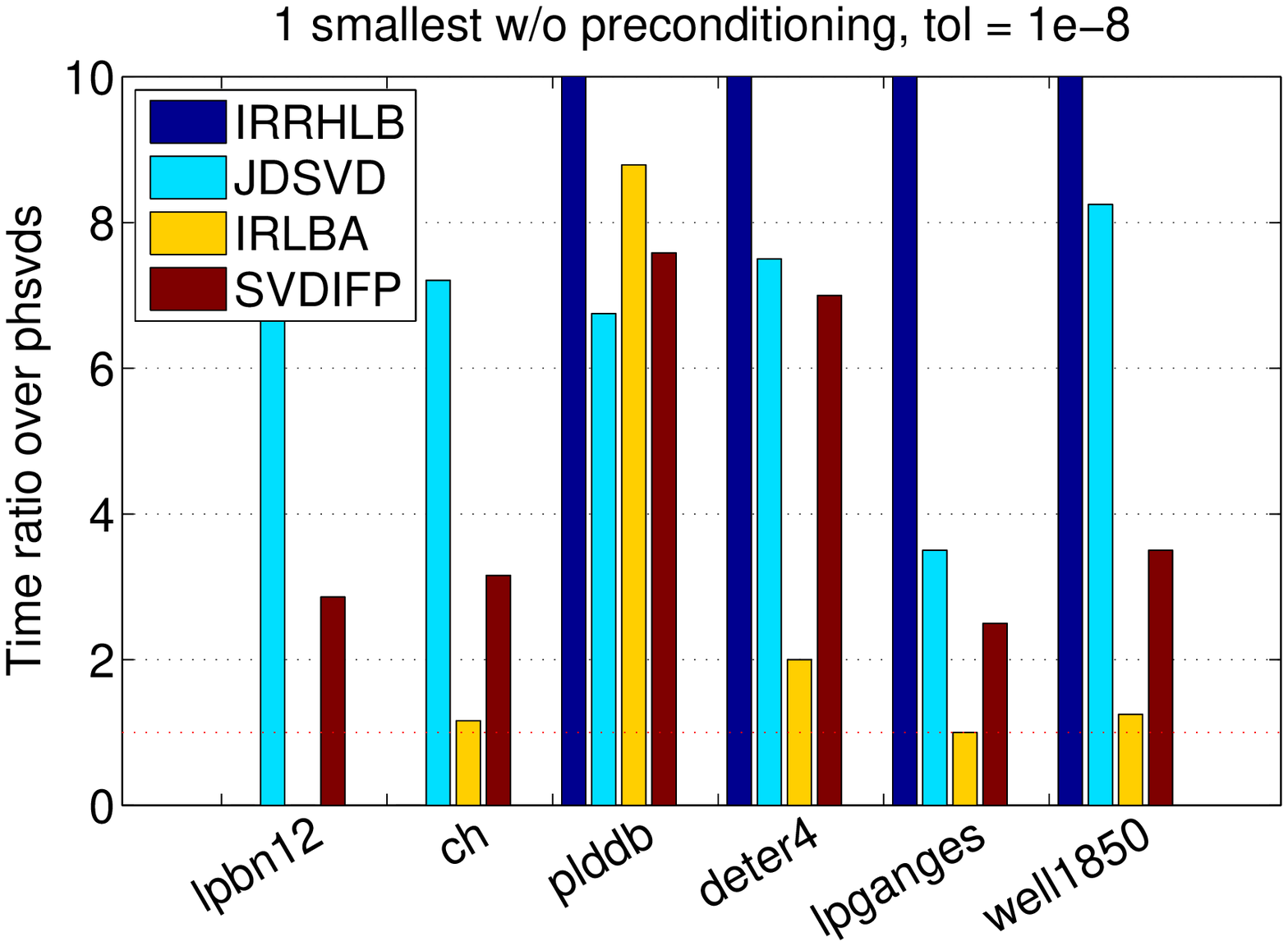}}                
 \subfigure[Time Ratio]{\includegraphics[width=0.49\textwidth]{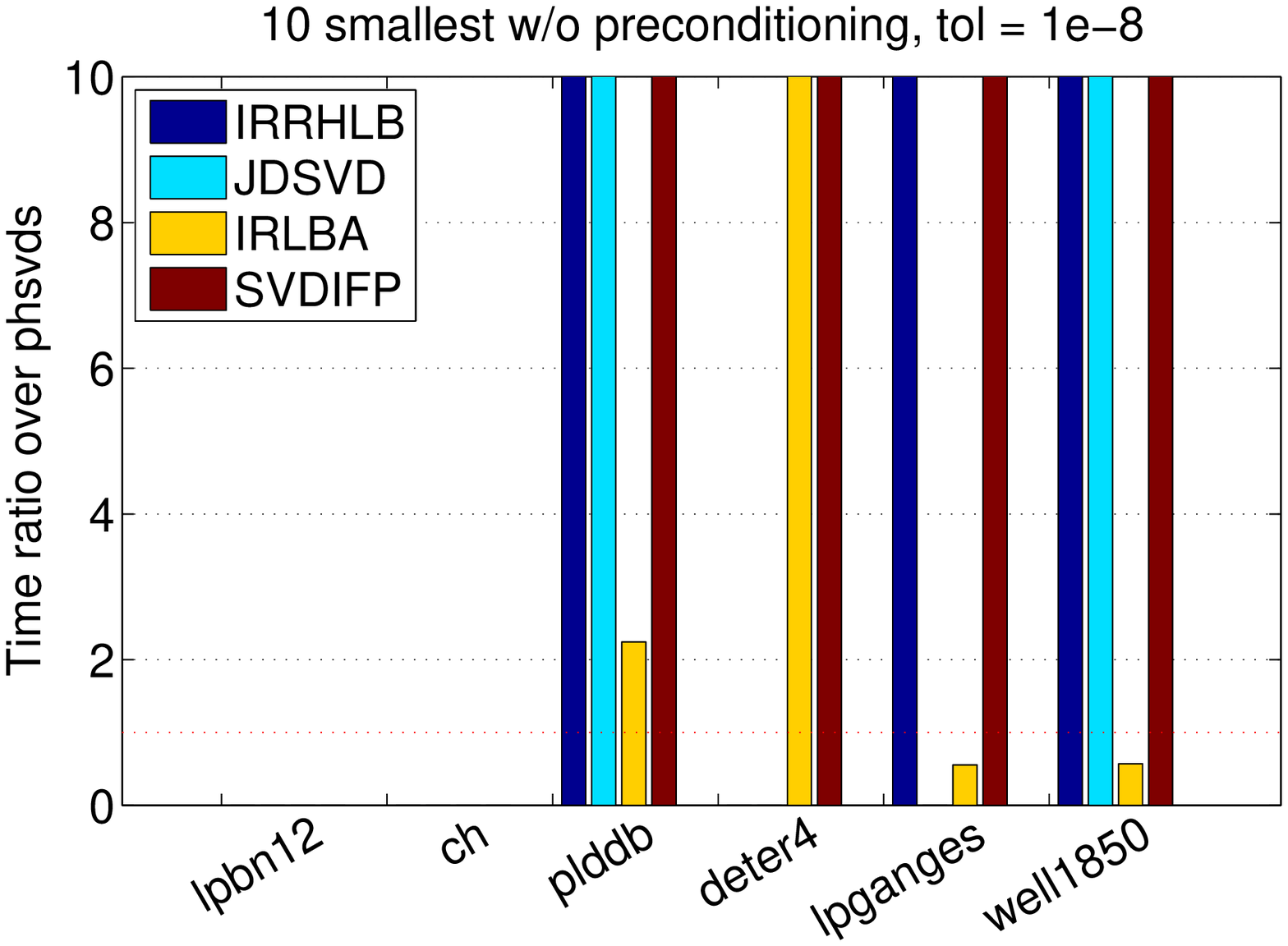}}   
\caption{Matvec and time ratios over PHSVDS(1st stage only) when seeking 1 and 10 smallest singular triplets of rectangular matrices with user tolerance 1E-8.}
\label{fig: ExpB-rectangular-1e-8}
\end{figure}
  
\begin{figure}[h]
  \centering
 \subfigure[Matvec Ratio]{\includegraphics[width=0.49\textwidth]{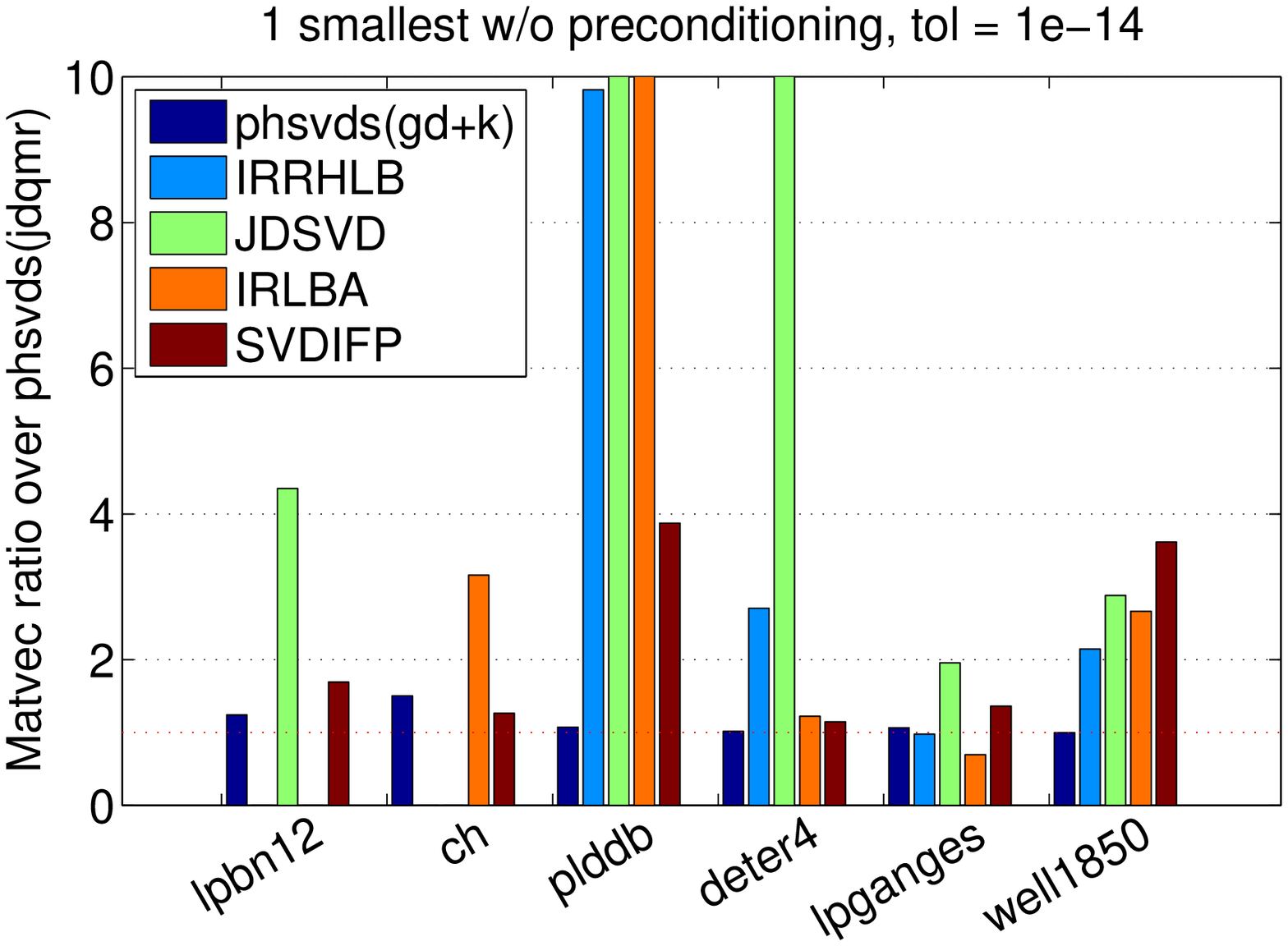}}
 \subfigure[Matvec Ratio]{\includegraphics[width=0.49\textwidth]{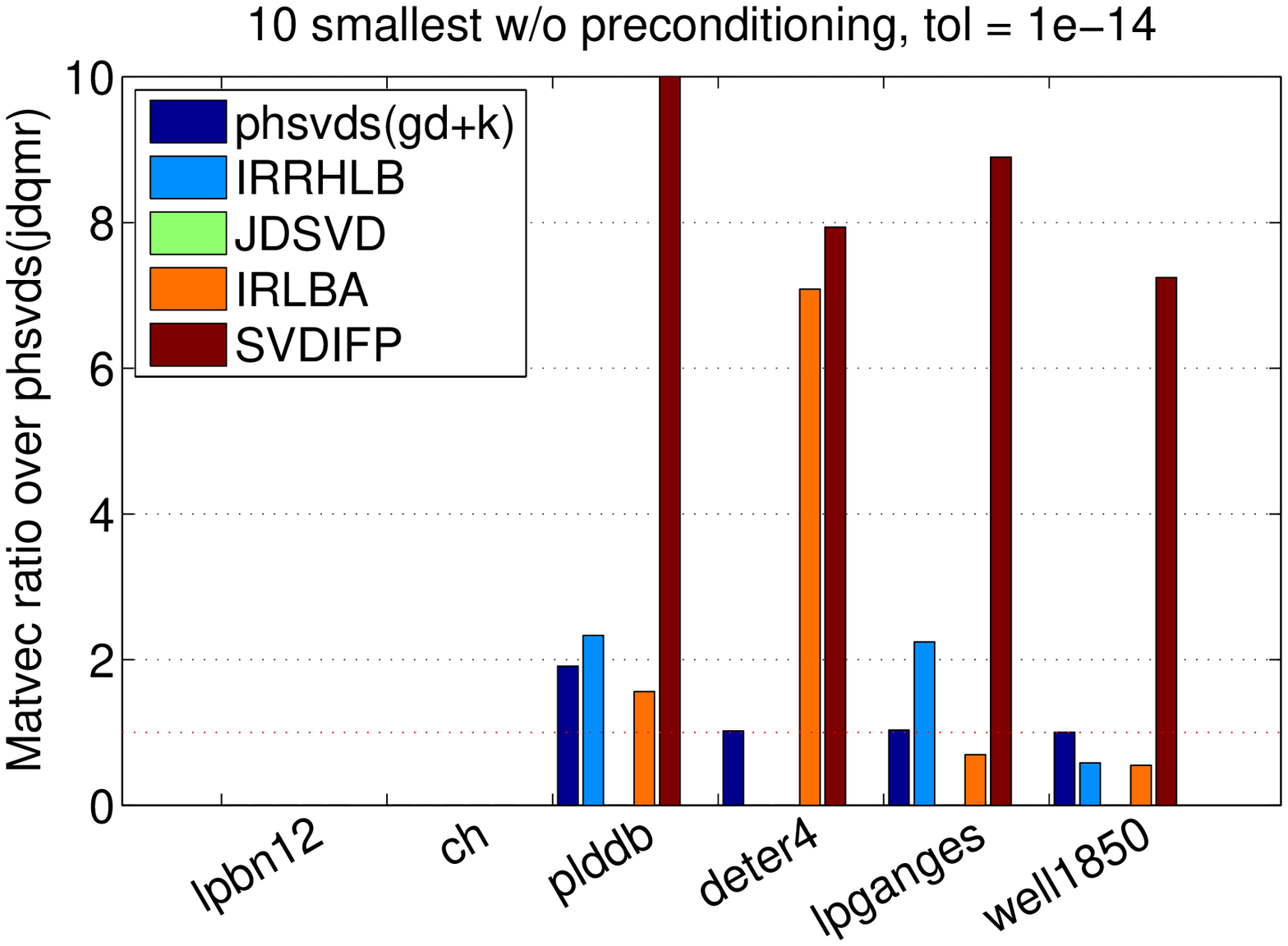}}
 \subfigure[Time Ratio]{\includegraphics[width=0.49\textwidth]{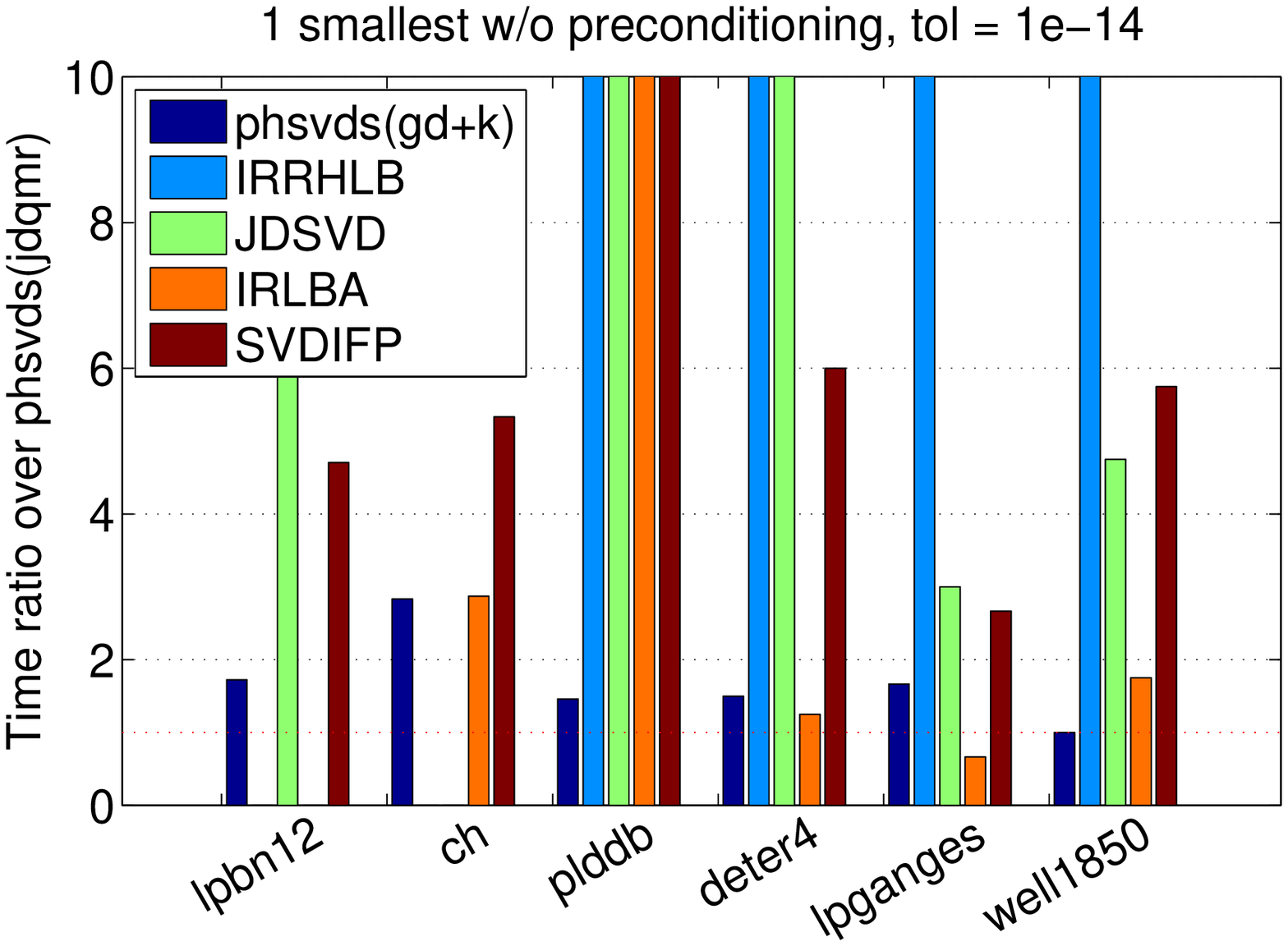}}                
 \subfigure[Time Ratio]{\includegraphics[width=0.49\textwidth]{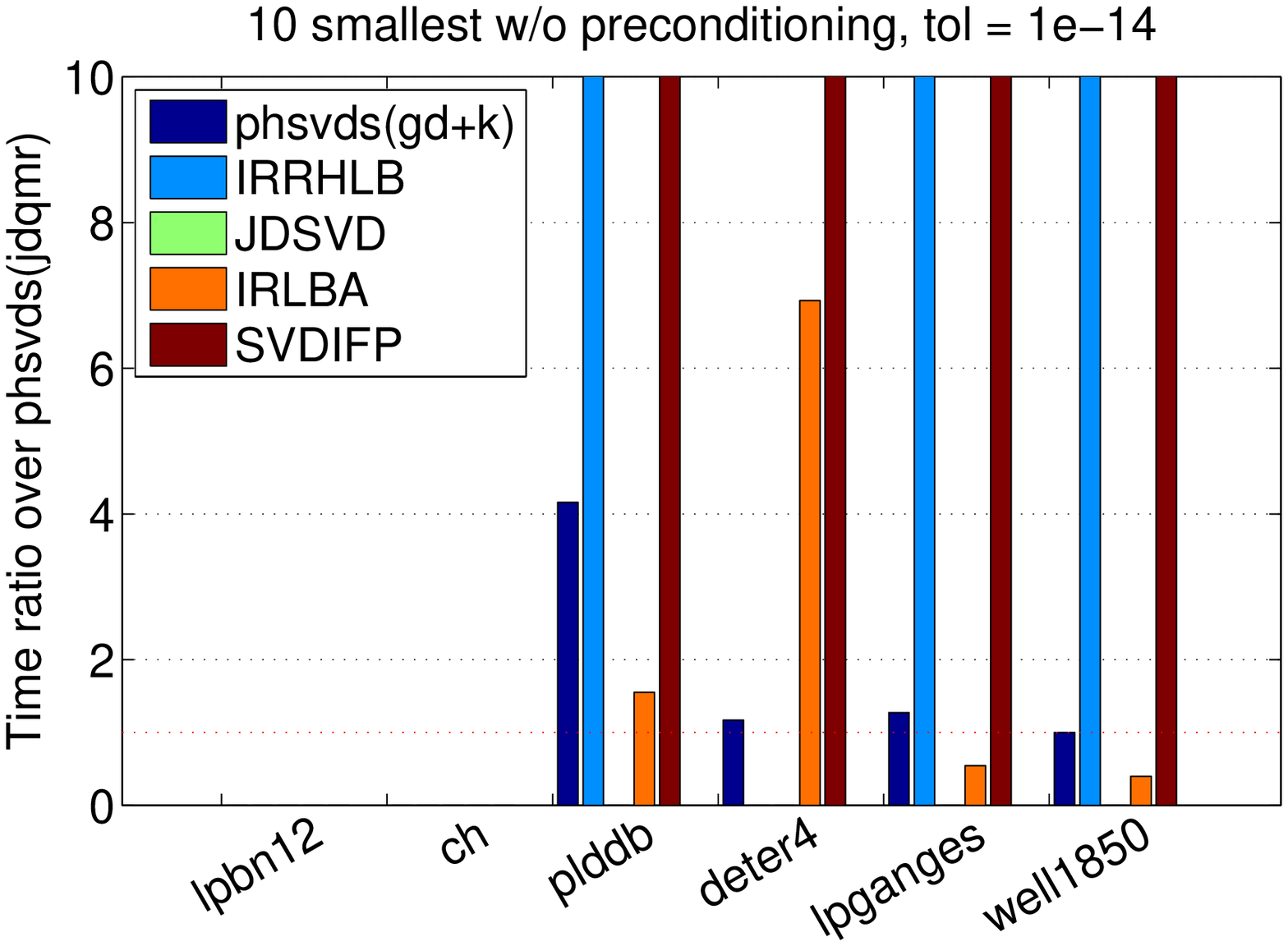}}   
\caption{Matvec and time ratios over PHSVDS(JDQMR) when seeking 1 and 10 smallest singular triplets of rectangular matrices with user tolerance 1E-14. 
The PHSVDS(GD+k) variant uses the GD+k eigenmethod in the second stage.}
\label{fig: ExpB-rectangular-1e-14}
\end{figure}

Figures \ref{fig: ExpB-square-1e-8}, \ref{fig: ExpB-square-1e-14}, \ref{fig: ExpB-rectangular-1e-8} and \ref{fig: ExpB-rectangular-1e-14} show that PHSVDS variants converge faster and more robustly than all other methods on both square and rectangular matrices. Specifically, Figure \ref{fig: ExpB-square-1e-8} shows that for moderate accuracy the normal equations solved with GD+k are significantly faster. 
For instance, PHSVDS is at least two or three times faster than other methods when solving hard problems for any number of smallest singular values. 
In fact, we have noticed that even for moderate accuracy, 
 all other methods are challenged by hard problems, where they are often inefficient or even fail to converge to all desired singular values. 
When solving easy problems, still PHSVDS is faster than other methods
  and only IRRHLB can be competitive when seeking 10 singular values.
This better global convergence for many eigenvalues is typical of the Lanczos method. The superiority of PHSVDS is a result of using a near-optimal eigenmethod.

Figure \ref{fig: ExpB-square-1e-14} shows that the PHSVDS(GD+k) variant
  is comparable in terms of matvecs to PHSVDS(JDQMR), which is the base 
  of the ratios, but the JDQMR typically requires less time if the matrix 
  is sparse enough.
It also shows that despite the slower convergence on the augmented 
  matrix in stage two, the higher accuracy requirement does not help 
  the rest of the methods.
For computing 10 eigenpairs, IRRHLB shows a small edge in the number of 
  iterations for two easy cases.
However, PHSVDS method never misses eigenvalues, is consistently much faster 
 than all other methods, and significantly faster than IRRHLB in hard cases.
SVDIFP is also not competitive, partly due to its inefficient restarting strategy. 
Interestingly, not only does PHSVDS enjoy better robustness but also its 
  execution time is ten times faster than IRRHLB for the cases where 
  IRRHLB takes fewer matvecs.

Figures \ref{fig: ExpB-rectangular-1e-8} and \ref{fig: ExpB-rectangular-1e-14} show that the advantage of PHSVDS is even more significant on rectangular matrices. For example, except for the two easy problems well1850 and lp\_ganges, PHSVDS is often five or ten times faster than the other methods. 
The reason is the beneficial use of the first stage, 
  but also because PHSVDS works on $C$ with dimension $\min(m,n)$, 
  which saves memory and computational costs. 
SVDIFP also shares this advantage.
Interestingly, JDQMR converges much faster than GD+k on some hard problems 
  such as plddb, ch and lp\_bnl2 in Figure \ref{fig: ExpB-rectangular-1e-14}.
The reason is the availability of excellent shifts from the first stage.
We conclude that PHSVDS is the fastest method and sometimes the 
  only method that converges for hard problems without preconditioning.

\subsection{With preconditioning}
The previous figures show the remarkable difficulty of
  solving for the smallest singular values.
Preconditioning is a prerequisite for practical problems,
   which limits our choice to PHSVDS, JDSVD and SVDIFP. 

We first compare our two stage method and our dynamic two-stage method for 
  two different quality preconditioners.  
We choose $M=LU$, the factorization obtained from MATLAB's ILU function 
  on a square matrix $A$ with parameters {\tt 'type=ilutp'}, 
  {\tt 'thresh=1.0'}, and 
  varying {\tt 'droptol=1E-2'} or {\tt 'droptol=1E-3'}. 
Given these two $M$, we form the preconditioners for PHSVDS as
   $M^{-1}M^{-T}$ and $[0 \ M^{-1};M^{-T} \ 0]$.
Without loss of generality, PHSVDS chooses GD+k for both stages.
We seek ten smallest singular values with tolerance 1E-14. 

As shown in Table \ref{ta: ExpC-dynamic}, both variants of PHSVDS 
  can solve the problems effectively with a good preconditioner ({\tt'droptol=1E-3'}).
In this case, the static two stage method is always better than the dynamic one
  because of the overhead incurred by switching between the two methods.
On the other hand, when using the preconditioner with {\tt'droptol=1E-2'}, 
  the two stage PHSVDS is slower than the dynamic in some cases, 
  and in the case of lshp3025, much slower. 
The reason is the inefficiency of the preconditioner in the normal equations.
Our dynamic PHSVDS can detect the convergence rate 
  difference and choose the faster method to accomplish the remaining computations.
Of course, if this issue is known beforehand, users can bypass the dynamic 
  heuristic and call directly the second stage.

\begin{table}[htbp]
\centering
\caption{Dynamic stage switching PHSVDS (D) vs two stage PHSVDS (P) with
two preconditioners:  (H)igh quality ILU(1e-3) and (L)ow quality ILU(1E-2).
We seek 10 smallest triplets with $\delta = $1E-14.}

\label{ta: ExpC-dynamic}
\small
\begin{center}
	\begin{tabular}{|c|c|rr|rr|rr|rr|rr|rr|}
	\hline
	  \multicolumn{2}{|r}{}
	 & \multicolumn{2}{c}{{\tt pde2961}}
	 & \multicolumn{2}{c}{{\tt dw2048}} 
	 & \multicolumn{2}{c}{{\tt fidap4}}
	 & \multicolumn{2}{c}{{\tt jagmesh8}}
	 & \multicolumn{2}{c}{{\tt wang3}} 
	 & \multicolumn{2}{c|}{{\tt lshp3025}}       \\  \hline
	     &     & MV & Sec &  MV  &  Sec &  MV &  Sec 
	               & MV & Sec &  MV  &  Sec &  MV &  Sec\\ \hline
	H    & P
	     & 166  & 0.4 & 211 & 0.7 & 210 & 1.3 
	     & 163  & 0.5 & 306 & 5.5 & 209 & 3.0\\ 
	H    & D
	     & 242  & 0.4 & 283 & 0.7 & 286 & 1.4
	     & 223  & 0.6 & 396 & 5.5 & 273 & 3.6 \\ \hline
	L    & P
	     & 258 & 0.5 & 673 & 1.6 & 813 & 3.2
	     & 990 & 3.1 & 736 & 8.9 & 7631 & 132 \\ 
	L    & D
	     & 307 & 0.5 & 668 & 1.5 & 1043 & 3.7 
	     & 547 & 1.6 & 1038 & 9.6 & 696 & 10 \\ \hline
	\end{tabular}
\end{center}
\end{table}

Next, we compare the two stage PHSVDS with JDSVD and SVDIFP with a
  good quality preconditioner.
Except for preconditioning, all other parameters remain as before.
For the first preconditioner we use MATLAB's ILU on a square matrix $A$.
For the second preconditioner we use the RIF MEX function provided 
  in \cite{liang2014computing} on a rectangular matrix with {\tt'droptol=1E-3'}.
The resulting RIF factors $LDL^{T}\approx A^TA$, where $D$ is diagonal matrix
  with 0 and 1 elements, are used to construct the pseudoinverses
  $M^{-1} = L^{-T} L^{-1} A^T $ and $M^{-T} = A L^{-T} L^{-1}$ for 
  preconditioning the second stage of PHSVDS and JDSVD. 
To obtain uniform behavior across methods, we disable in SVDIFP 
  the parameter 'COLAMD', which computes an approximate minimum 
  degree column permutation to obtain sparser LU factors.
For JDSVD, we try both enabling and disabling the initial Krylov 
  subspace and report the best result.

\begin{table}[htbp]
\centering
\caption{Seeking 1 and 10 smallest singular triplets with ILU, droptol = 1E-3. 
We report results from both PHSVDS variants: PHSVDS(GD+k) and PHSVDS(JDQMR). 
We report separately the time for generating the preconditioner and the time 
  for running each method.}
\label{ta: ExpC-ILU-1e-3}
\small
\begin{center}
	\begin{tabular}{|l|l|rr|rr|rr|rr|}
	\hline
	 & \multicolumn{1}{|r}{$\delta =$ 1E-8 \hfill Matrix:}
	 & \multicolumn{2}{c}{{\tt fidap4}}   
	 & \multicolumn{2}{c}{{\tt jagmesh8}}
	 & \multicolumn{2}{c}{{\tt wang3}} 
	 & \multicolumn{2}{c|}{{\tt lshp3025}}       \\  \hline
	 & \multicolumn{1}{|r}{\hfill ILU Time:} 
	 & \multicolumn{2}{c}{{\tt 0.1}}   
	 & \multicolumn{2}{c}{{\tt 0.1}}
	 & \multicolumn{2}{c}{{\tt 2.8}} 
	 & \multicolumn{2}{c|}{{\tt 0.1}}       \\  \hline
	$k$ & Method & MV & Sec & MV & Sec &  MV  &  Sec &  MV   &  Sec \\ \hline
	1 &  {\footnotesize PHSVDS(1st stage only)} 
	     & 15 & 0.1 & 13 & 0.1 & 46 & 2.5 & 19 & 0.3 \\ 
	1 &  {\footnotesize JDSVD}
	     & 67 & 0.7 & 34 & 0.3 & 45 & 2.0 & 56 & 1.5 \\ 
	1 &  {\footnotesize SVDIFP}
	     & 58 & 0.4 & 51 & 0.2 & 132 & 5.7 & 82 & 1.7 \\ \hline	      
    10 &  {\footnotesize PHSVDS(1st stage only)} 
	     & 117 & 0.6 & 91 & 0.2 & 185 & 2.5 & 122 & 1.7 \\ 
	10 & {\footnotesize JDSVD}
	     & 342 & 3.1 & 287 & 1.4 & 320 & 15.7 & 364 & 10.5 \\ 	
	10 & {\footnotesize SVDIFP}
	     & 691 & 3.0 & 561 & 1.2 & 1179 & 29.1 & 1187 & 21.9 \\ \hline		     
	\end{tabular}
\end{center}	     
\small
\begin{center}
	\begin{tabular}{|l|l|rr|rr|rr|rr|}
	\hline
	 & \multicolumn{1}{|r}{$\delta =$ 1E-14 \hfill Matrix:}
	 & \multicolumn{2}{c}{{\tt fidap4}}   
	 & \multicolumn{2}{c}{{\tt jagmesh8}}
	 & \multicolumn{2}{c}{{\tt wang3}} 
	 & \multicolumn{2}{c|}{{\tt lshp3025}}       \\  \hline
	 & \multicolumn{1}{|r}{\hfill ILU Time:} 
	 & \multicolumn{2}{c}{{\tt 0.1}}   
	 & \multicolumn{2}{c}{{\tt 0.1}}
	 & \multicolumn{2}{c}{{\tt 2.8}} 
	 & \multicolumn{2}{c|}{{\tt 0.1}}       \\  \hline 
	1 &  {\footnotesize PHSVDS(GD+k)} 
	     & 62 & 0.6 & 52 & 0.1 & 102 & 1.8 & 66 & 0.5 \\ 
	1 &  {\footnotesize PHSVDS(JDQMR)} 
	     & 64 & 0.3 & 55 & 0.1 & 106 & 1.1 & 68 & 0.5 \\
	1 &  {\footnotesize JDSVD}
	     & 78 & 1.5 & 45 & 0.3 & 67 & 3.0 & 79 & 1.2 \\ 
	1 &  {\footnotesize SVDIFP}
	     & 98 & 0.6 & 100 & 0.3 & 235 & 8.3 & 159 & 2.3 \\ \hline	 	          
	10 &  {\footnotesize PHSVDS(GD+k)} 
	     & 210 & 1.3 & 163  & 0.5 & 306 & 5.5 & 209 & 3.0 \\ 
	10 &  {\footnotesize PHSVDS(JDQMR)} 
	     & 251 & 1.2 & 215 & 0.5 & 402 & 5.0 & 265 & 3.5 \\
	10 &  {\footnotesize JDSVD}
	     & 573 & 5.5 & 408 & 1.9 & 518 & 14.6 & 606 & 26.9 \\ 	
	10 &  {\footnotesize SVDIFP}
	     & 1152 & 5.4 & 981 & 1.9 & 1991 & 51.4 & 1897 & 29.1 \\ \hline		       	     	     
	\end{tabular}
\end{center}
\end{table}

\begin{table}[htbp]
\centering
\caption{Seeking 1 and 10 smallest singular triplets with RIF, droptol = 1E-3. 
We report results from both PHSVDS variants: PHSVDS(GD+k) and PHSVDS(JDQMR). 
We report separately the time for generating the preconditioner and the time 
  for running each method.}
\label{ta: ExpC-RIF-1e-3}
\small
\begin{center}
	\begin{tabular}{|l|l|rr|rr|rr|rr|rr|}
	\hline
	 \multicolumn{2}{|r}{$\delta =$ 1E-8 \hfill Matrix:}
	 & \multicolumn{2}{c}{{\tt fidap4}}   
	 & \multicolumn{2}{c}{{\tt jagmesh8}}      
	 & \multicolumn{2}{c}{{\tt deter4}}   
	 & \multicolumn{2}{c}{{\tt plddb}}
	 & \multicolumn{2}{c|}{{\tt lp\_bnl2}}       \\  \hline
	 & \multicolumn{1}{|r}{\hfill RIF Time:} 
	 & \multicolumn{2}{c}{{\tt 1.5}}   
	 & \multicolumn{2}{c}{{\tt 0.5}}       
	 & \multicolumn{2}{c}{{\tt 11.0}}   
	 & \multicolumn{2}{c}{{\tt 0.4}}
	 & \multicolumn{2}{c|}{{\tt 1.6}}       \\  \hline		 	 
	$k$ & Method & MV & Sec &  MV  &  Sec & MV & Sec & MV & Sec &  MV  &  Sec \\ \hline
	1 &  {\footnotesize PHSVDS} 
	     & 291 & 0.4 & 119 & 0.1 & 27 & 0.2 & 10 & 0.1 & 15 & 0.1 \\ 
	1 &  {\footnotesize JDSVD}
	     & 1729 & 8.9 & 1311 & 3.6  & 122 & 3.8 & 67 & 0.3 & 89 & 0.5 \\ 
	1 &  {\footnotesize SVDIFP}
	     & 513 & 1.4 & 622 & 0.9 & 142 & 2.1 & 29 & 0.1 & 49 & 0.1 \\ \hline	
	10 &  {\footnotesize PHSVDS} 
	     & 1224 & 1.8 & 307 & 0.5 & 405 & 2.4 & 52 & 0.1 & 74 & 0.1 \\ 
	10 &  {\footnotesize JDSVD}
	     & 8131 & 39.5 & 2356 & 5.7 & -- & -- & -- & -- & -- & -- \\ 
	10 &  {\footnotesize SVDIFP}
	     & 4359 & 10.3 & 3118 & 4.2  & 2278 & 45.4 & 390 & 1.0 & 453 & 1.0 \\ \hline		           
	\end{tabular}
\end{center}	     
\small
\begin{center}
	\begin{tabular}{|l|l|rr|rr|rr|rr|rr|}
	\hline
	  \multicolumn{2}{|r}{$\delta =$ 1E-14 \hfill Matrix:}
	 & \multicolumn{2}{c}{{\tt fidap4}}   
	 & \multicolumn{2}{c}{{\tt jagmesh8}}      
	 & \multicolumn{2}{c}{{\tt deter4}}   
	 & \multicolumn{2}{c}{{\tt plddb}}
	 & \multicolumn{2}{c|}{{\tt lp\_bnl2}}       \\  \hline	 
	 & \multicolumn{1}{|r}{\hfill RIF Time:} 
	 & \multicolumn{2}{c}{{\tt 1.5}}   
	 & \multicolumn{2}{c}{{\tt 0.5}}      
	 & \multicolumn{2}{c}{{\tt 11.0}}   
	 & \multicolumn{2}{c}{{\tt 0.4}}
	 & \multicolumn{2}{c|}{{\tt 1.6}}       \\  \hline			 	
	$k$ & Method & MV & Sec  &  MV  &  Sec & MV & Sec & MV & Sec &  MV  &  Sec \\ \hline	  
	1 &  {\footnotesize p(GD+k)} 
	     & 521 & 1.0 & 207 & 0.3 & 82 & 0.5 & 45 & 0.1 & 66 & 0.1\\ 
	1 &  {\footnotesize p(JDQMR)} 
	     & 544 & 1.0 & 229 & 0.2 & 87 & 0.4 & 45 & 0.1 & 69 & 0.1\\
	1 &  {\footnotesize JDSVD}
	     & 2037 & 10.5 & 1410 & 3.6 & 188 & 5.5 & 122 & 0.5 & 134 & 0.6\\ 
	1 &  {\footnotesize SVDIFP}
	     & 843 & 2.1 & 990 & 1.3 & 221 & 4.3 & 48 & 0.1 & 63 & 0.1 \\ \hline	     
	10 &  {\footnotesize p(GD+k)} 
	     & 2074 & 3.2 & 562 & 0.8 & 769 & 2.5 & 152 & 0.5 & 192 & 0.5\\ 
	10 &  {\footnotesize p(JDQMR)} 
	     & 2604 & 4.9 & 641 & 0.9 & 877 & 5.3 & 247 & 0.5 & 242 & 0.5\\
	10 &  {\footnotesize JDSVD}
	     & -- & -- & 12057 & 28.2 & -- & -- & -- & -- & -- & -- \\ 
	10 &  {\footnotesize SVDIFP}
	     & 7470 & 15.1 & 5024 & 6.1 & 3705 & 64.5 & 748 & 1.0 & 862 & 1.0 \\ \hline			     	 	               	     
	\end{tabular}
\end{center}
\end{table}

Tables \ref{ta: ExpC-ILU-1e-3} and \ref{ta: ExpC-RIF-1e-3} show that a 
  good preconditioner makes the problems tractable, 
  with all three methods solving the problems effectively.
Still, in most cases PHSVDS provides much faster convergence 
  and execution time on both square and rectangular matrices. 
We see that when seeking one smallest singular value with high accuracy,
  JDSVD takes less iterations for one square matrix (wang3), 
  and SVDIFP is competitive in two rectangular cases (plddb and lp\_bnl2).
This is because these cases require very few iterations, and the first
  stage of PHSVDS forces a Rayleigh-Ritz with 21 extra matrix-vector
  operations. 
This robust step is not necessary for this quality of preconditioning. 
If we are allowed to tune some of its parameters (as we did with JDSVD
  and SVDIFP) PHSVDS does require fewer iterations even in these 
  cases.

\subsection{With the shift-invert technique}
We report results on seeking 10 smallest eigenvalues with 
  PHSVDS, SVDIFP, {\tt svds}, and {\tt lansvd}. 
Because shift-invert turns an interior to a largest eigenvalue problem, 
  PHSVDS does not need the second stage.
{\tt svds} uses the augmented matrix $B$ for the shift-invert operator, 
while {\tt lansvd} computes a QR factorization of $A$.
For PHSVDS and SVDIFP we use two different factorizations, 
 an LU and a QR factorization of $A$.
All methods get the same basis size of 40, except for {\tt lansvd} which 
  is an unrestarted LBD code.
Thus, {\tt lansvd} represents an optimal method in terms of convergence,
  albeit expensive in terms of memory and computation per step.
We disable the 'COLAMD' option in SVDIFP and {\tt lansvd}, 
set the SVDIFP shifts to zero, and give a shift 1E-8 to {\tt svds}.
We have instrumented the {\tt svds} code to return the number of iterations.
To facilitate comparisons, we include the LU and QR factorization times in 
  the running times of all methods, but also report them separately. 
The tolerance is $\delta$ = 1E-10.

\begin{table}[htbp]
\centering
\caption{Seeking 10 smallest singular triplets using shift-invert.
LU(A) and QR(A) are the times for LU and QR factorizations of $A$.
The time of each method includes the associated factorization time.}
\label{ta: ExpD-shift-invert}
\small
\begin{center}
	\begin{tabular}{|l|rr|rr|rr|rr|rr|}
	\hline
	\multicolumn{1}{|r}{$\delta$ = 1E-10}
	 & \multicolumn{2}{c}{{\tt fidap4}}   
	 & \multicolumn{2}{c}{{\tt jagmesh8}}     
	 & \multicolumn{2}{c}{{\tt deter4}}   
	 & \multicolumn{2}{c}{{\tt plddb}}
	 & \multicolumn{2}{c|}{{\tt lp\_bnl2}}       \\  \hline	 
	Method & MV & Sec &  MV  & Sec & MV & Sec &  MV  &  Sec &  MV   &  Sec \\ \hline
	{\footnotesize LU(A) time}
	     & -- & 0.02 & -- & 0.01  & -- & 0.01 & --  & 0.01 & -- & 0.01  \\
	{\footnotesize PHSVDS} 
	      & 31 & 0.10 & 26 & 0.07 & 167 & 14.4 & 47  & 0.28 & 35 & 1.01  \\ 
	{\footnotesize SVDIFP} 
	      & 380 & 0.90 & 316 & 0.31 & 1177 & 168.4 & 418  & 1.92 & 432 & 9.3  \\ \hline	     
	{\footnotesize QR(A) time}
	      & -- & 0.02 & -- & 0.01 & -- & 0.53 & --  & 0.01 & -- & 0.10   \\ 
	{\footnotesize PHSVDS} 
	      & 31 & 0.29 & 26 & 0.08 & 166 & 9.1 & 27  & 0.08 & 36 & 0.48  \\ 
	{\footnotesize SVDIFP} 
	      & 383 & 2.24 & 316 & 0.37 & 1177 & 55.4 & 418  & 0.55 & 432 & 3.13  \\ \hline	
	{\footnotesize {\tt svds}}
	      & 73 & 0.33 & 61 & 0.23 & -- & -- & --  & -- & -- & -- \\ \hline
	{\footnotesize {\tt lansvd}}
	      & 31 & 0.37 & 26 & 0.24 & 133 & 3.3 & 28  & 0.23 & 34 & 0.43 \\ \hline	  
	\end{tabular}
\end{center}
\end{table}

Table \ref{ta: ExpD-shift-invert} shows that PHSVDS is faster than 
  {\tt svds} both in convergence and execution time, partly because
  it works on $C$ which is smaller in size and allows for faster convergence.
Note that {\tt svds} does not work well on rectangular matrices because
  $B$ becomes singular and cannot be inverted, and if instead 
  a small shift is used, it finds the zero eigenvalues of $B$ first. 
SVDIFP's strategy to use an inverted operator as preconditioner 
  does not seem to be as effective.
PHSVDS seems to follow closely the optimal convergence of {\tt lansvd},
  although there is high variability in execution times. 
We believe this is a function not only of the cost of the iterative method 
  but also of the different factorizations used by the two algorithms.
  
\subsection{On large scale problems}
We use PHSVDS, SVDIFP, and JDSVD to compute the smallest singular 
  triplet of matrices of order larger than 1 million. 
Information on these matrices appears in Table \ref{ta: ExpE-bigMatrix-info}.
We apply the two-stage PHSVDS on all test matrices except thermal2, 
  which is solved with dynamic PHSVDS. 
The preconditioners are applied similar to our previous experiments
  with the exception that ILU uses {\tt 'thresh=0.1'}, and {\tt 'udiag=1'}.
The tolerance is $\delta = 1E-12$. 
The symbol ``*'' means the method returns results that either did
  not satisfy the desired accuracy or did not converge to the 
  smallest singular triplet.

\begin{table}[htbp]
\centering
\caption{Basic information of some large scale matrices }
\label{ta: ExpE-bigMatrix-info}
\small
\begin{center}
    \begin{tabular}{ |c|c|c|c|c|c|}
    \hline
    Matrix 			& debr & cage14 & thermal2 & sls & Rucci1 \\ \hline
    rows $m$: 		& 1048576 & 1505785 & 1228045 & 1748122 & 1977885  \\ 
    cols $n$: 		& 1048576 & 1505785 & 1228045 & 62729 & 109900   \\ \hline
    nnz(A) 			& 4194298  & 27130349 & 8580313 & 6804304  & 7791168  \\ \hline
    $\sigma_1$ 	    & 1.11E-20  & 9.52E-2 & 1.61E-6 & 9.99E-1 & 1.04E-3  \\ \hline
    $\kappa(A)$     & 3.60E+20  & 1.01E+1 & 7.48E+6 & 1.30E+3   & 6.74E+3  \\ \hline
    Application 		& undirected & directed & thermal & Least & Least \\
    		    		& graph      &   graph &         & Squares & Squares  \\ \hline
    Preconditioner	        &  No        &  ILU(0) & ILU(1E-3) & RIF(1E-3) & RIF(1E-3) \\ \hline
    \end{tabular}
\end{center}
\end{table}

Table \ref{ta: ExpE-bigMatrix2} shows the results without or with various preconditioners.
Debr is a numerically singular square matrix. 
PHSVDS is capable of resolving this more efficiently than JDSVD,
  while SVDIFP returns early when it detects that it is not likely to 
  converge to the desired accuracy for left singular vector \cite{liang2014computing}.
All methods easily solve problem cage14 with ILU(0), but PHSVDS is much faster.
Thermal2 is an ill-conditioned matrix, whose preconditioner turns out to be less 
  effective for $C$ than for $B$.
Therefore, SVDIFP has much slower convergence than JDSVD.
Thanks to the dynamic scheme, PHSVDS recognizes this deficiency
  and converges without too many additional iterations, and with the same
  execution time as JDSVD.
However, if we had prior knowledge about the preconditioner's performance, 
  running only at the second stage gives almost exactly the same 
  matrix-vectors as JDSVD and much lower time.
Reducing further the overhead of the dynamic heuristic is part of our 
  current research.
JDSVD often fails to converge to the smallest singular value 
  for rectangular matrices since it has difficulty to distinguish them from 
  zero eigenvalues of $B$, as shown in the cases sls and Rucci1. 
For matrix sls, SVDIFP misconverges to the wrong singular triplet 
  while PHSVDS is successful in finding the correct one. 
SVDIFP and PHSVDS have similar performance for solving problem Rucci1.
In summary, PHSVDS is far more robust and more efficient than 
  either of the other two methods for large problems.

\begin{table}[htbp]
\centering
\caption{Seeking the smallest singular triplet for large scale problems. We report the time of each method including their running time and associated factorization time (PRtime) separately.}
\label{ta: ExpE-bigMatrix2}
\small
\begin{center}
	\begin{tabular}{|r|r|rrr|rrr|rrr|}
	\hline
	\multicolumn{2}{|l}{$\delta$ = 1E-12} 
	& \multicolumn{3}{c}{{\tt PHSVDS}}   
	& \multicolumn{3}{c}{{\tt SVDIFP}}
	& \multicolumn{3}{c|}{{\tt JDSVD}}        \\  \hline		  
	\multicolumn{2}{|r|}{Matrix \hfill PRtime}
		 &  MV & Sec & RES &  MV  &  Sec & RES &  MV &  Sec & RES \\ \hline
	{\footnotesize debr } 
	& --- & 539 & 84 & 3E-12  & 403* & 246* & 2E-1 & 1971 & 474.6 & 2E-12     \\ \hline
	{\footnotesize cage14 } 
	& 2E+0 & 19 & 11 & 4E-13 & 33 & 28 & 6E-13  & 111 & 185 & 7E-14   \\ \hline	  
{\footnotesize thermal2 } 
	& 3E+3 & 419 & 506 & 7E-12 & -- & -- & 4E-9  & 309 & 535 & 4E-12   \\ \hline	  	
	{\footnotesize sls }
	& 3E+3 & 1779  & 170 & 1E-09 & 408* & 328* & 1E-9  & --  & -- & 2E-0  \\ \hline	
	{\footnotesize Rucci1 }
	& 6E+4 & 4728 & 1087  & 7E-12 & 4649 & 6464 & 6E-12 & -- & -- & 5E-3 \\ \hline
	\end{tabular}
\end{center}
\end{table}

\section{Conclusion}

In this paper, we present a two stage meta-method, PHSVDS, that computes
  smallest or largest singular triplets of large matrices. 
In the first stage PHSVDS solves the eigenvalue problem on the 
  normal equations as a fast way to get sufficiently accurate approximations,
  and if further accuracy is needed, solves an interior eigenvalue problem 
  from the augmented matrix.
We have presented an algorithm and several techniques required
  both at the meta-method and at the eigenvalue solver level 
  to allow for an efficient solution of the problem.
We have motivated the merit of this approach theoretically, 
  and confirmed its performance through an extensive set of experiments. 
  
Our current implementation of PHSVDS is in MATLAB but based on top of the 
  state-of-the-art preconditioned eigensolver PRIMME. 
Thus, PHSVDS improves on convergence and robustness over other 
  state-of-the-art singular value methods, and can be used 
  on large, real world problems. 
A native C implementation of PHSVDS as part of PRIMME is planned next.

\section*{Acknowledgements}
The authors thank Zhongxiao Jia, James Baglama, Michiel Hochstenbach and Qiang Ye for generously providing their codes. The authors would also like to thank the referees for their valuable comments. This work is supported by NSF under grants No. CCF 1218349 and ACI SI2-SSE 1440700, and by DOE under a grant No. DE-FC02-12ER41890.

\bibliographystyle{siam}

\bibliography{primme_svds}

\end{document}